\keywords{Gorenstein projective module; totally acyclic complex; cotorsion pair; $\lambda$-pure-injective module; deconstructible class}
\thanks{The first author is partially supported by the Spanish Government under Grants MTM2016-77445-P and MTM2017-86987-P which include FEDER funds of the EU.\\
Research of the second author supported by GA\v CR 20-13778S}
\newcommand{\ep}{\varepsilon}
\newcommand{\Hom}{\operatorname{Hom}}
\newcommand{\Ext}{\operatorname{Ext}}
\newcommand{\PExt}{\operatorname{PExt}}
\newcommand{\Tor}{\operatorname{Tor}}
\newcommand{\Ker}{\operatorname{Ker}}
\newcommand{\cf}{\operatorname{cf}}
\newcommand{\Img}{\operatorname{Im}}
\newcommand{\Coker}{\operatorname{Coker}}
\newcommand{\Modr}[1]{\mathrm{Mod}\textrm{-}{#1}}
\newcommand{\Flat}{\mathrm{Flat}}
\newcommand{\Inj}{\mathrm{Inj}}
\newcommand{\Proj}{\mathrm{Proj}}
\DeclareMathOperator{\GProj}{GProj}
\DeclareMathOperator{\PGFlat}{PGFlat}
\DeclareMathOperator{\AGProj}{AGProj}
\DeclareMathOperator{\DGProj}{DGProj}
\newcommand{\ModR}{\mathrm{Mod}\textrm{-}R}
\newcommand{\Filt}[1]{\operatorname{Filt}{#1}}
\newcommand{\Prod}{\mathrm{Prod}}
\DeclareMathOperator{\Z}{Z}
\theoremstyle{plain}
\newtheorem{theorem}{Theorem}[section]
\newtheorem{lemma}[theorem]{Lemma}
\newtheorem{proposition}[theorem]{Proposition}
\newtheorem{corollary}[theorem]{Corollary}
\newtheorem{definition}[theorem]{Definition}
\newtheorem{remark}[theorem]{Remark}
\title[Gorenstein and $\lambda$-pure-injective modules]{THE COTORSION PAIR GENERATED BY THE GORENSTEIN PROJECTIVE MODULES AND $\lambda$-PURE-INJECTIVE MODULES}
\author{Manuel Cort\'es-Izurdiaga}
\address{Departamento de Matemática Aplicada, Universidad de Málaga, 29071, Málaga, Spain}
\email{mizurdiaga@uma.es}
\author{Jan \v{S}aroch}
\address{Charles University, Faculty of Mathematics and Physics, Department of Algebra, Sokolovsk\'a 83, 186 75 Praha 8, Czech Republic}
\email{saroch@karlin.mff.cuni.cz}
\begin{document}

\begin{abstract}
We prove that, if $\GProj$ is the class of all Gorenstein projective modules over a ring $R$, then $\mathfrak{GP}=(\GProj,\GProj^\perp)$ is a cotorsion pair. Moreover, $\mathfrak{GP}$ is complete when all projective modules are $\lambda$-pure-injective for some infinite regular cardinal $\lambda$ (in particular, if $R$ is right $\Sigma$-pure-injective); the latter condition is shown to be consistent with the axioms of ZFC modulo the existence of strongly compact cardinals.

We also thoroughly study $\lambda$-pure-injective modules for an arbitrary infinite regular cardinal $\lambda$, proving along the way that: any cosyzygy module in an injective coresolution of a $\lambda$-pure-injective module is $\lambda$-pure-injective; the cotorsion pair cogenerated by a~class of $\lambda$-pure-injective modules is cogenerated by a~set and, under an additional technical assumption, generated by a~set.

Finally, assuming the set-theoretic hypothesis that $0^\sharp$ does not exist, we prove that the category of right $R$-modules has enough $\lambda$-pure-injective objects if and only if the ring $R$ is right pure-semisimple. This, in turn, follows from a~rather surprising result that $\lambda$-pure-injectivity amounts to pure-injectivity in the absence of $0^\sharp$.
\end{abstract}

\maketitle

\section{Introduction}

Notions related with Gorenstein modules were first introduced by Auslander in~\cite{Auslander67}, where he defined the G-dimension of finitely generated modules over commutative noetherian rings, and characterized rings where all finitely generated modules have finite G-dimension in a similar way as Auslander--Buchsbaum--Serre did with commutative noetherian regular rings. These ideas were extended to non-finitely generated modules over non-commutative rings by Enochs and collaborators who defined the central notion of Gorenstein projective module and considered its natural variations, Gorenstein flat and Gorenstein injective modules (see \cite{EnochsJenda} for the general theory of Gorenstein modules). Since then, several classes of modules defined in a similar fashion as the Gorenstein ones have emerged, e.g.\ Gorenstein AC-projective modules \cite{BravoGillespieHovey}, Ding-projective modules \cite{DingLiMao} and projectively coresolved Gorenstein flat modules \cite{SarochStovicek}. All these classes are $\mathcal C$-Gorenstein projective for a~suitable class of modules $\mathcal C$, see Definition \ref{d:CGorenstein} and Proposition \ref{p:ProjectivelyCoresolvedAreCGorenstein}.

Among the problems related with these Gorenstein modules, the ones concerning the cotorsion pairs induced by them have been widely considered in the literature. One reason of this is because, due to the general theory developed by Hovey \cite{Hovey02}, these cotorsion pairs induce model structures in the category of complexes, which makes it possible to give different descriptions of the derived category. For instance, it is known that the Gorenstein modules \cite{Hovey02} or the Ding-projective modules \cite{Gillespie10} induce such model structures for right Gorenstein regular rings and Ding--Chen rings, respectively. 

The following questions naturally arise in these settings. What classes of modules appear in the cotorsion pair generated (or cogenerated) by these modules? And, is this cotorsion pair complete? Regarding the class $\GProj$ of Gorenstein projective modules, it is known that the cotorsion pair generated by them is $(\GProj,\Proj_\omega)$, where $\Proj_\omega$ consists of all modules with finite projective dimension, and that it is complete, provided the ring is right Gorenstein regular \cite[Theorem 2.26]{EnochsEstradaGarciaRozas}, i.e.\ it has finite right Gorenstein projective global dimension. Moreover, if $R$ is left coherent and right perfect, then the class $\GProj$ coincide with the class of projectively coresolved Gorenstein flat modules (see Proposition~\ref{p:ProjectivelyCoresolvedAreCGorenstein} for the definition of these modules) and, by \cite[Theorem 4.9]{SarochStovicek}, $(\GProj,\GProj^\perp)$ is a complete cotorsion pair. However, the situation over general rings is much less clear.

In this paper, we prove that, for any ring $R$, $\mathfrak{GP} = (\GProj,\GProj^\perp)$ is a cotorsion pair (see Corollary~\ref{c:CotorsionPair}). Furthermore, we show that $\mathfrak{GP}$ is complete (see Corollary \ref{c:complete} and Theorem \ref{t:complete}) provided that there exists an infinite regular cardinal $\lambda$ such that every projective module is $\lambda$-pure-injective (in particular, provided that $R$ is right $\Sigma$-pure-injective, so that we extend the aforementioned result about left coherent and right perfect rings, since these rings are right $\Sigma$-pure-injective by \cite[Proposition 11.1]{JensenLenzing}). Finally, assuming a set-theoretical hypothesis concerning compact cardinals (see Section~\ref{sec:5} for details), we show that the cotorsion pair $\mathfrak{GP}$ is complete for any ring $R$.

We use $\lambda$-pure-injective modules, where $\lambda$ is an infinite regular cardinal, to study when the cotorsion pair generated by the Gorenstein projective modules is complete. The $\lambda$-pure-injective modules are the natural extensions of the classical ($\aleph_0$-)pure-injective modules that appear when one considers linear systems of equations with less than $\lambda$ equations, instead of finite systems of equations, see \cite{Saroch15}. These systems characterize the $\lambda$-purity, which is a~natural notion in the setting of $\lambda$-presentable and, more generally, $\lambda$-accessible categories, cf.~\cite{AdamekRosicky}. We obtain, in Section~\ref{sec:4}, some results about $\lambda$-pure-injective modules and about the cotorsion pairs cogenerated by families of them. For instance, that any cosyzygy module in an injective coresolution of a~$\lambda$-pure-injective module is $\lambda$-pure-injective (Proposition~\ref{p:cosyz}); or that, if $\mathcal C$ is a~class of $\lambda$-pure-injectives, then ${^\perp}\mathcal C$ is closed under $\lambda$-pure epimorphic images and $\lambda$-pure submodules (Theorem~\ref{t:closureprop}); finally, under certain circumstances, the class ${^\perp}\mathcal C$ is shown to be even deconstructible (Corollary~\ref{c:CotorsionPairCogeneratedLambdaPureInjectives}).

We finish the paper studying the problem of the existence of enough $\lambda$-pure-injective modules. It is a~classical result that every module purely embeds into a~pure-injective module, see~\cite[Theorem 4.3.18]{Prest}. So it is natural to ask if this remains true for $\lambda$-pure-injective modules and $\lambda$-pure embeddings where $\lambda$ is uncountable.  Assuming the set-theoretical hypothesis ``$0^{\sharp}$ does not exist'', we show in Theorem~\ref{t:0sharp} that there is such an uncountable (regular) $\lambda$ if and only if the ring is right pure-semisimple. The proof of this fact is based on a~slightly surprising Proposition~\ref{p:0sharp} which asserts that $\lambda$-pure-injectivity coincides with pure-injectivity in the absence of $0^\sharp$. Its proof, in turn, relies on a~general result from the recent paper \cite{BS23} and a~somewhat technical Lemma~\ref{l:wdiamond} which utilizes the \emph{Weak Diamond Principle}.

\section{Preliminaries}
\label{sec:preliminaries}

Given a set $A$, we denote by $|A|$ its cardinality. If $\mu$ and $\lambda$ are cardinals, $\mu^{<\lambda}$ is the cardinality of the set of all maps $f\colon \alpha \rightarrow \mu$ with $\alpha < \lambda$. An infinite cardinal $\kappa$ is called \emph{singular} if there exists a cardinal $\mu < \kappa$ and a family of cardinals $\{\kappa_\alpha \mid \alpha < \mu\}$ with $\kappa_\alpha < \kappa$ for each $\alpha$, such that $\kappa = \sum_{\alpha < \mu}\kappa_\alpha$. A \emph{regular} cardinal is a non-singular cardinal.

Throughout this paper, $R$ denotes a (not necessarily commutative) ring with unit. A~module means a~right $R$-module. We use $\Proj$, $\Flat$ and $\Inj$ to denote the classes of projective, flat and injective modules respectively.

Let $\mathcal A$ be a class of modules. The class $\mathcal A$ of modules is called \emph{resolving} (resp.\ \emph{coresolving}) if it contains all projective (resp.\ injective) modules and it is closed under extensions and kernels of epimorphisms (resp.\ cokernels of monomorphisms). We denote by $\mathcal A^\perp$ (resp.\ ${}^\perp\mathcal{A}$) the corresponding Ext-orthogonal classes of $\mathcal{A}$, that is,
\begin{displaymath}
\mathcal A^{\perp} = \{M \in \Modr R \mid \Ext_R^1(A,M)=0 \quad \forall A \in \mathcal A\}
\end{displaymath}
and
\begin{displaymath}
{}^\perp\mathcal{A} = \{M \in \Modr R \mid \Ext_R^1(M,A)=0 \quad \forall A \in \mathcal A\}.
\end{displaymath}
An \emph{$\mathcal A$-precover} (resp.\ an \emph{$\mathcal A$-preenvelope}) of a module $M$ is a morphism $\varphi\colon A \rightarrow M$ (resp.\ $\psi\colon M \rightarrow A$) with $A \in \mathcal A$ and such that $\Hom_R(A^\prime,\varphi)$ (resp.\ $\Hom_R(\psi,A^\prime)$) is epic for each $A^\prime \in \mathcal A$. The precover (resp.\ preenvelope) is \emph{special} if it is epic (resp.\ monic) and $\Ker \varphi \in \mathcal A^\perp$ (resp.\ $\Coker \psi \in {^\perp}\mathcal A$). A \emph{cotorsion pair} in $\Modr R$ is a pair of classes of modules, $(\mathcal F,\mathcal C)$, such that $\mathcal C=\mathcal F^\perp$ and $\mathcal F = {^\perp}\mathcal C$. The cotorsion pair is called \emph{complete} if every module has a~special $\mathcal F$-precover, equivalently a~special $\mathcal C$-preenvelope, and \emph{hereditary} if $\mathcal F$ is resolving and $\mathcal C$ is coresolving.

A useful notion related with complete cotorsion pairs is that of deconstructible class. An \emph{$\mathcal A$-filtration} of a~module $M$ is a~continuous chain of submodules of $M$, $(M_\alpha\mid \alpha\leq \sigma)$, indexed by some ordinal $\sigma$, such that $M_0 = 0$, $\frac{M_{\alpha+1}}{M_\alpha} \in \mathcal A$ for each $\alpha < \sigma$ and $M = M_\sigma$. We denote by $\Filt(\mathcal A)$ the class of all $\mathcal A$-filtered modules, i.e.\ modules possessing an $\mathcal A$-filtration. We say that $\mathcal A$ is \emph{closed under filtrations} if $\Filt(\mathcal A) \subseteq \mathcal A$, and \emph{deconstructible} if there exists a set of modules $\mathcal S$ such that $\mathcal A = \Filt(\mathcal S)$. Since $\Filt(\mathcal A)$ is closed under filtrations, $\mathcal A$ is deconstructible if and only if $\mathcal A$ is closed under filtrations and there exists a set $\mathcal S \subseteq \mathcal A$ such that $\mathcal A \subseteq \Filt(\mathcal S)$. Moreover, from Eklof lemma, \cite[Lemma 6.2]{GobelTrlifaj21} and \cite[Theorem 6.11]{GobelTrlifaj21}, it follows that a~cotorsion pair $(\mathcal F,\mathcal C)$ is complete provided that $\mathcal F$ is deconstructible.

% \cite[Lemma 1.6]{Stovicek13}
Given a complex $X$ of modules,
\begin{displaymath}
\begin{tikzcd}
\arrow{r} \cdots & X^{n-1} \arrow{r}{d_X^{n-1}} & X^n \arrow{r}{d_X^n} & X^{n+1} \arrow{r} & \cdots,
\end{tikzcd}
\end{displaymath}
we denote by $\Z^n(X)$ the kernel of $d_X^n$. A module $M$ is \textit{Gorenstein projective} if there exists an exact complex $X$ consisting of projective modules, such that $\Z^n(X) \in {^\perp}\Proj$ for each $n \in \mathbb Z$ and $M \cong \Z^0(X)$. We denote by $\GProj$ the class of all Gorenstein projective modules. We are interested in the following extension of the notion of Gorenstein projective module.

\begin{definition}\label{d:CGorenstein}
Let $\mathcal C$ be a class of modules. A module $M$ is called:
\begin{itemize}
\item \emph{$\mathcal C$-Gorenstein projective} if there exists an exact complex $X$ consisting of projective modules such that $\Z^n(X) \in {^\perp}\mathcal C$ for each $n \in \mathbb Z$ and $M \cong \Z^0(X)$. We denote by $\mathcal \GProj_\mathcal{C}$ the class of all such modules.

\item \emph{Strongly $\mathcal C$-Gorenstein projective} if $M \in {^\perp}{\mathcal C}$ and there exists a short exact sequence
\begin{displaymath}
\begin{tikzcd}
0 \arrow{r} & M \arrow{r} & P \arrow{r} & M \arrow{r} & 0
\end{tikzcd}
\end{displaymath}
with $P$ projective.
\end{itemize}
\end{definition}

It is clear that each strongly $\mathcal C$-Gorenstein projective module is $\mathcal C$-Gorenstein projective. On the other hand, it is easy to see (cf.\ \cite[Lemma~4.1]{SarochStovicek}) that the direct sum of all the cycles in a complex $X$ of projective modules satisfying $\Z^n(X) \in {^\perp}\mathcal C$ is a strongly $\mathcal C$-Gorenstein projective module. Hence, each $\mathcal C$-Gorenstein projective is a direct summand of a~strongly $\mathcal C$-Gorenstein projective module. Moreover, if $\Proj\subseteq\mathcal C$, then $\GProj_{\mathcal C}$ is precisely the class of all direct summands of strongly $\mathcal C$-Gorenstein projective modules (since we know, e.g.\ by Theorem~\ref{t:CotPair}, that the class of $\mathcal C$-Gorenstein projective modules is closed under direct summands).

Clearly, the Gorenstein projective modules are the $\Proj$-Gorenstein modules and the Ding-projective modules \cite[Definition 3.7]{Gillespie10} are the $\Flat$-Gorenstein modules. Other variations of the concept of Gorenstein projective module are $\mathcal C$-Gorenstein projective for a suitable class of modules $\mathcal C$ as well:

\begin{proposition}\label{p:ProjectivelyCoresolvedAreCGorenstein}
\begin{enumerate}
\item The projectively coresolved Gorenstein flat modules \cite[p. 15]{SarochStovicek} are the $\langle \Flat \rangle$-Gorenstein projective modules, where $\langle \Flat \rangle$ denotes the smallest definable subcategory of $\Modr R$ (i.e., closed under pure submodules, products and direct limits) that contains $\Flat$.

\item The Gorenstein AC-projective modules \cite[p. 30]{BravoGillespieHovey} are the $\mathcal L$-Gorenstein projective modules, where $\mathcal L$ is the class of all level modules (a~module $L$ is \emph {level} if $\Tor_1^R(L,F) = 0$ for each left module $F$ with a~projective resolution consisting of finitely generated projectives).
\end{enumerate}
\end{proposition}

\begin{proof}
(1) Follows from \cite[Corollary 4.5]{SarochStovicek}.

(2) Follows from \cite[Corollary A.7]{BravoGillespieHovey}.
\end{proof}

\section{Cotorsion pair generated by $\mathcal C$-Gorenstein projective modules}

In this section we prove that the cotorsion pair generated by the class of $\mathcal C$-Gorenstein projective modules is $(\mathcal \GProj_{\mathcal C}, \GProj_{\mathcal C}^\perp)$. In particular, we get that the cotorsion pair generated by the Gorenstein projective modules is $(\GProj,\GProj^\perp)$. The proof is based on the following result:

\begin{lemma} \label{l:CotorsionPairGenerated}
Let $\mathcal B$ be a class of $R$-modules closed under kernels of epimorphisms and products. Suppose that there exists a generator $G$ such that $G^{(\kappa)} \in \mathcal B$ for each cardinal $\kappa$. Then, for each infinite cardinal $\kappa$, every $\kappa$-generated module $M \in {^\perp}\mathcal B$ has a~$\mathcal B$-preenvelope $\varphi\colon M \rightarrow G^{(\kappa)}$.

If, in addition, $\mathcal B$ contains a cogenerator of $\ModR$ and $G \in {^\perp}\mathcal B$, then $\varphi$ is a~special $\mathcal B$-preenvelope.
\end{lemma}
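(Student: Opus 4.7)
The plan is to prove the preenvelope existence in two stages, organized around a factoring lemma that does most of the work. First I would show that every morphism $f \colon M \to B$ with $B \in \mathcal{B}$ admits a factorization $f = g\tilde{f}$ with $\tilde{f} \colon M \to G^{(\kappa)}$. Given such an $f$, take an epimorphism $\sigma \colon G^{(\lambda)} \twoheadrightarrow B$ (available since $G$ is a generator) and form the pullback of $f$ along $\sigma$. Both $G^{(\lambda)}$ and $B$ lie in $\mathcal{B}$, so $\Ker \sigma \in \mathcal{B}$ by the closure under kernels of epimorphisms; the resulting extension
\begin{displaymath}
0 \to \Ker \sigma \to P \to M \to 0
\end{displaymath}
then splits because $M \in {}^\perp\mathcal{B}$ and $\Ker \sigma \in \mathcal{B}$. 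A section followed by the projection onto $G^{(\lambda)}$ produces $\tilde{f}_0 \colon M \to G^{(\lambda)}$ with $\sigma\tilde{f}_0 = f$; since $M$ is $\kappa$-generated, the image of $\tilde{f}_0$ is supported on at most $\kappa$ coordinates of $\lambda$, so $\tilde{f}_0$ corestricts through an embedding $G^{(\kappa)} \hookrightarrow G^{(\lambda)}$ to give $\tilde{f} \colon M \to G^{(\kappa)}$.

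Next I would build the universal $\varphi \colon M \to G^{(\kappa)}$ out of these factoring maps. Observe that every morphism in $\Hom(M, G^{(\kappa)})$ is itself a factoring map in the above sense (take $B = G^{(\kappa)} \in \mathcal{B}$ and $\sigma = \mathrm{id}$), so the preenvelope property reduces to the condition that every morphism $M \to G^{(\kappa)}$ factors through $\varphi$ via some $h \colon G^{(\kappa)} \to G^{(\kappa)}$. The construction assembles a suitable family of factoring maps diagonally and then exploits the $\kappa$-generation of $M$ to compress the resulting image back into $G^{(\kappa)}$: any image of $M$ under such a combined map is $\kappa$-generated, and its support in the receiving coproduct is therefore contained in a sub-coproduct isomorphic to $G^{(\kappa)}$. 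I expect this combinatorial bookkeeping --- keeping the target exactly $G^{(\kappa)}$ rather than a larger coproduct indexed by $\Hom(M, G^{(\kappa)})$ --- to be the main technical obstacle of the proof.

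For the special preenvelope part, assume in addition that $\mathcal{B}$ contains a cogenerator $C$ and that $G \in {}^\perp\mathcal{B}$. The cogenerator yields an embedding $\iota \colon M \hookrightarrow C^\mu \in \mathcal{B}$ for some $\mu$ (using closure under products), and by the preenvelope property, $\iota$ factors through $\varphi$, forcing $\varphi$ itself to be injective. Writing $D := \Coker \varphi$ and applying $\Hom(-, B)$ for $B \in \mathcal{B}$ to the short exact sequence $0 \to M \to G^{(\kappa)} \to D \to 0$ yields
\begin{displaymath}
\Hom(G^{(\kappa)}, B) \to \Hom(M, B) \to \Ext^1(D, B) \to \Ext^1(G^{(\kappa)}, B).
\end{displaymath}
The preenvelope property makes the first arrow surjective, and $\Ext^1(G^{(\kappa)}, B) = \Ext^1(G, B)^\kappa = 0$ by the hypothesis $G \in {}^\perp\mathcal{B}$. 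Hence $\Ext^1(D, B) = 0$ for every $B \in \mathcal{B}$, so $D \in {}^\perp\mathcal{B}$, and $\varphi$ is a special $\mathcal{B}$-preenvelope.
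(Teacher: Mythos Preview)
Your Stage~1 is correct and is exactly the lifting argument the paper uses: cover $B$ by a free module, use $M\in{}^\perp\mathcal B$ to lift, and then use that $M$ is $\kappa$-generated to land in $G^{(\kappa)}$. Your treatment of the special-preenvelope clause is also fine and matches the paper.

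The gap is in Stage~2. You correctly reduce the preenvelope property to: find a single $\varphi\colon M\to G^{(\kappa)}$ through which every $\psi\in\Hom(M,G^{(\kappa)})$ factors. But your proposed construction, ``assemble the factoring maps diagonally and then compress using supports in a coproduct'', cannot work as stated: there is no natural map from $M$ into the \emph{coproduct} $\bigoplus_{\psi}G^{(\kappa)}$ encoding all the $\psi$ simultaneously; the diagonal lands in the \emph{product}. You acknowledge this as the ``main technical obstacle'' and leave it open.

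The missing step is that Stage~1 already closes this gap. Set $B=\prod_{\psi\in\Hom(M,G^{(\kappa)})}G^{(\kappa)}$, which lies in $\mathcal B$ by closure under products, and apply Stage~1 to the canonical map $\Delta\colon M\to B$. This produces $\varphi\colon M\to G^{(\kappa)}$ and $g\colon G^{(\kappa)}\to B$ with $g\varphi=\Delta$; composing $g$ with the $\psi$th projection shows that every $\psi$ factors through $\varphi$. No further support argument is needed --- the $\kappa$-generation of $M$ was already consumed inside Stage~1. The paper does essentially this, packaged as a proof by contradiction: assuming no $\psi$ is a preenvelope, it forms the product $B=\prod_\psi B_\psi$ of witnessing obstructions, applies the lifting argument to $M\to B^{\Hom(M,B)}$, and obtains a $\bar\rho\colon M\to G^{(\kappa)}$ for which $\Hom(\bar\rho,B_{\bar\rho})$ is surjective after all.
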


\begin{proof}
Suppose that the result is not true for some $\kappa$-generated module $M$. Then, for each $\psi \in \Hom_R(M,G^{(\kappa)})$, there exist $B_\psi \in \mathcal B$ and some $f_\psi\in \Hom_R(M, B_\psi)\setminus\Img(\Hom_R(\psi,B_\psi))$. Set \[B=\prod_{\psi \in \Hom_R(M,G^{(\kappa)})}B_\psi\] and let $f\colon M \rightarrow B$ be induced by the family $(f_\psi\mid \psi \in \Hom_R(M,G^{(\kappa)}))$. Take an epimorphism $\pi\colon G^{(\mu)} \rightarrow B$ for some infinite cardinal $\mu$, which can be assumed to be greater than or equal to $\kappa$. Since $G^{(\mu)}, B \in \mathcal B$ and $\mathcal B$ is closed under kernels of epimorphisms by our hypotheses, $\Ker \pi \in \mathcal B$ as well. Since $M \in \mathcal {^\perp}\mathcal B$, there exists a~homomorphism $\rho \colon M \to G^{(\mu)}$ such that $\pi\rho = f$. Since $M$ is $\kappa$-generated, $\rho$ factors through $G^{(\kappa)}$, that is, there exists a~$\bar \rho \in \Hom_R(M, G^{(\kappa)})$ such that $\iota \bar \rho = \rho$, where $\iota$ is the canonical inclusion of $G^{(\kappa)}$ into $G^{(\mu)}$. 

In particular, if we denote by $\pi_{\bar \rho}\colon B \to B_{\bar\rho}$ the $\bar\rho$th canonical projection, then $f_{\bar\rho} = \pi_{\bar\rho}f = \pi_{\bar\rho} \pi \iota\bar\rho \in \Img(\Hom_R(\bar\rho,B_{\bar\rho}))$, a~contradiction. We have shown the existence of a~$\mathcal B$-preenvelope $\varphi\colon M \rightarrow G^{(\kappa)}$.

In order to prove the last assertion, notice that $\varphi$ has to be monic since $\mathcal B$ contains a~cogenerator, $\mathcal B$ is closed under products and $\varphi$ is a~$\mathcal B$-preenvelope. Moreover, since $G^{(\kappa)} \in \mathcal {^\perp}\mathcal B$ and $\varphi$ is one-one, $\Coker \varphi \in \mathcal {^\perp}\mathcal B$ as well. That is, $\varphi$ is a special $\mathcal B$-preenvelope.
\end{proof}

We want to apply Lemma~\ref{l:CotorsionPairGenerated} to $\GProj_{\mathcal C}$. We will use the following: 

\begin{lemma}\label{l:PropertiesCGorenstein}
Let $\mathcal C$ be a class of modules containing all projective modules.
\begin{enumerate}
\item If $M \in \mathcal \GProj_{\mathcal C}^{\perp}$, then $\Ext_R^n(H,M)=0$ for each positive integer $n$ and $H\in\mathcal \GProj_{\mathcal C}$.

\item $\GProj_{\mathcal C}^\perp$ contains $\mathcal C$ and is resolving and coresolving.
\end{enumerate}
\end{lemma}

\begin{proof}
(1) Given $H \in \GProj_{\mathcal C}$, there exists a projective resolution $X$ of $H$ with $\Z^n(X) \in \GProj_{\mathcal C}$ for each $n \leq -1$. Consequently, if $M \in \GProj_{\mathcal C}^{\perp}$, then $ 0 =\Ext_R^1(\Z^{-n+1}(X),M) \cong \Ext_R^n(H,M)$ for each $n > 1$.

(2) $\GProj_{\mathcal C}^\perp$ is coresolving by (1). In order to prove that it is resolving, take a~short exact sequence
\begin{equation}\label{e:Ses1}
\begin{tikzcd}
0 \arrow{r} & A \arrow{r} & B \arrow{r} & C \arrow{r} & 0
\end{tikzcd}
\end{equation}
with $B$ and $C$ belonging to $\GProj_{\mathcal C}^\perp$. Fix $H \in \GProj_{\mathcal C}$ and take a~short exact sequence
\begin{equation}\label{e:Ses2}
\begin{tikzcd}
0 \arrow{r} & H \arrow{r} & P \arrow{r} & G \arrow{r} & 0
\end{tikzcd}
\end{equation}
with $P \in \Proj$ and $G \in \GProj_{\mathcal C}$. Applying $\Hom_R(G,-)$ to (\ref{e:Ses1}) we get the exact sequence
\begin{displaymath}
\begin{tikzcd}
\Ext_R^1(G,C) \arrow{r} & \Ext_R^2(G,A) \arrow{r} & \Ext_R^2(G,B)
\end{tikzcd}
\end{displaymath}
from which $\Ext_R^2(G,A)=0$. But (\ref{e:Ses2}) implies that $\Ext_R^1(H,A) \cong \Ext_R^2(G,A) =0$.

Finally, the inclusion $\GProj_{\mathcal C} \subseteq {^\perp}\mathcal C$ implies that $\mathcal C \subseteq \GProj_{\mathcal C}^\perp$.
\end{proof}

Now, the main theorem of this section:

\begin{theorem}\label{t:CotPair}
Let $\mathcal C$ be a class of modules containing all projective modules. Then $(\GProj_{\mathcal C},\GProj_{\mathcal C}^{\perp})$ is a hereditary cotorsion pair with $\GProj_{\mathcal C}^\perp$ a resolving class.
\end{theorem}

\begin{proof}
The class $\GProj_{\mathcal C}^\perp$ is resolving and coresolving by Lemma \ref{l:PropertiesCGorenstein}. Then we just have to prove that ${^\perp}(\GProj_{\mathcal C}^\perp)$ is contained in $\GProj_{\mathcal C}$. Take $M \in {^\perp}{(\GProj_{\mathcal C} ^\perp)}$ and let us construct a complex $X$ of projectives with $\Z^n(X) \in {}^{\perp}\mathcal C$ for each $n < \omega$ and $\Z^0(X) = M$.

Since $\GProj_{\mathcal C}^\perp$ is resolving, closed under products and contains all projective modules by Lemma \ref{l:PropertiesCGorenstein}, we can apply Lemma \ref{l:CotorsionPairGenerated} with $\mathcal B =  \GProj_{\mathcal C}^\perp$ to get a~special $\GProj_{\mathcal C}^\perp$-preenvelope $M \rightarrow X^0$ with $X^0$ free of rank $\kappa$, for $\kappa$ the number of generators of $M$. We can repeat this argument recursively to construct $X^n$ for each $n > 0$. These satisfy that $X^{n} \rightarrow X^{n+1} \rightarrow X^{n+2}$ is exact, $X^n$ is free, $\Z^n(X) \in {^\perp}{(\GProj_{\mathcal C}^\perp)}$ for each $n \geq 0$, and $\Z^0(X) = M$.

Now take a projective presentation $X^{-1} \rightarrow M$, where $X^{-1}$ can be taken to be free. Since $\GProj_{\mathcal C}^\perp$ is coresolving, ${^\perp}(\GProj_{\mathcal C}^\perp)$ is resolving, so that the kernel of this projective presentation belongs to ${^\perp}(\GProj_{\mathcal C}^\perp)$. We can repeat this argument recursively to construct $X^n$ for each $n<-1$. 
\end{proof}

As a consequence of this result we obtain that $(\GProj,\GProj^{\perp})$ is a hereditary cotorsion pair. Moreover, we can compute the left hand class of the cotorsion pairs generated by the classes considered in Proposition \ref{p:ProjectivelyCoresolvedAreCGorenstein}:

\begin{corollary}\label{c:CotorsionPair}
The following pair of classes are hereditary cotorsion pairs with the right hand classes being resolving:
\begin{enumerate}
\item $(\GProj,\GProj^\perp)$.

\item $(\PGFlat,\PGFlat^\perp)$, where $\PGFlat$ is the class of all projectively coresolving Gorenstein flat modules.

\item $(\AGProj,\AGProj^\perp)$, where $\AGProj$ is the class of all Gorenstein $AC$-projective modules.

\item $(\DGProj,\DGProj^\perp)$, where $\DGProj$ is the class of all Din-projective modules.
\end{enumerate}

\end{corollary}

\begin{proof}
(1) and (4) follows immediately from the previous theorem. (2) and (3) follows from the previous theorem as well using the description of Gorenstein AC-projective and Ding-projective modules given in Proposition \ref{p:ProjectivelyCoresolvedAreCGorenstein}.
\end{proof}

\section{Cotorsion pairs cogenerated by $\lambda$-pure-injective modules}
\label{sec:4}

Let $\lambda$ be an infinite regular cardinal. In this section we prove some results about $\lambda$-pure-injective modules which will be used in the next one in order to prove that the cotorsion pair generated by the Gorenstein projective modules is complete when every projective module is $\lambda$-pure-injective.

Recall that an embedding $M\subseteq N$ of $R$-modules is called \emph{$\lambda$-pure} if each system consisting of $R$-linear equations with parameters in $M$ and having cardinality $<\lambda$ has a solution in $M$ provided that it has a solution in $N$. A short exact sequence $0 \rightarrow K \xrightarrow{f} M \xrightarrow{g} L \rightarrow 0$ is \emph{$\lambda$-pure} if the embedding $f(K) \subseteq M$ is $\lambda$-pure. In this case, $g$ is called a \emph{$\lambda$-pure epimorphism}. Notice that $\lambda$-pure embeddings, and correlatively $\lambda$-pure short exact sequences, are rather frequent. For instance, for the direct limit $L$ of any $\lambda$-directed system $\mathcal S$ of modules, the canonical pure short exact sequence $0\to K \overset{\subseteq}{\to} \bigoplus_{M\in \mathcal S} M \to L \to 0$ is actually $\lambda$-pure. Of course, if $\lambda = \aleph_0$, we obtain the usual notion of purity. As usual, we will denote by $\textrm{PExt}_R^1(N,M)$ the derived functors of the Hom functors with respect to the pure-exact structure of $\Modr R$. Notice that $\textrm{PExt}_R^1(N,M)=0$ if and only if every pure-exact sequence $0 \rightarrow M \rightarrow L \rightarrow N \rightarrow 0$ splits.

The following result is an extension to $\lambda$-purity of the well known purification lemma (see \cite[Lemma 5.3.12]{EnochsJenda}).

\begin{lemma}\label{l:purification}
Let $\lambda$ be an infinite regular cardinal and $\kappa$ an infinite cardinal such that $|R|\leq \kappa$ and $\kappa^{<\lambda}=\kappa$. Then, for any $\kappa$-generated submodule $K$ of a module $M$, there exists a~$\kappa$-generated $\lambda$-pure submodule $\overline K$ of $M$ containing $K$.
\end{lemma}

\begin{proof}
We construct, using transfinite recursion, a chain of $\kappa$-generated submodules $\{A_\alpha \mid \alpha < \lambda\}$ of $M$ such that $K \leq A_0$ and any system with less than $\lambda$ equations and parameters in $\bigcup_{\gamma < \alpha}A_\gamma$ that has a solution in $M$, has solution in $A_\alpha$ for any $\alpha < \lambda$ non-zero.

Set $A_0=K$ and suppose that for some $\beta < \lambda$ we have constructed $A_\alpha$ for each $\alpha < \beta$. Set $A^\prime_\beta=\bigcup_{\alpha < \beta}A_\alpha$. Then $|A^\prime_\beta|\leq \kappa$, so that, because of the hypotheses on $\kappa$, the set $\mathcal S$ of systems of $R$-linear equations with less than $\lambda$ equations and parameters in $A^\prime_\beta$ having a solution in $M$, has cardinality less than or equal to $\kappa$. So, the submodule $A_\beta$ of $M$ generated by $A^\prime_\beta$ and the set consisting of one solution for every system belonging to $\mathcal S$ has cardinality less than or equal to $\kappa$ as well. 

Clearly, the submodule $\overline K=\bigcup_{\alpha < \lambda}A_\alpha$ satisfies the desired properties.
\end{proof}

We say that a~filtration $(M_\alpha\mid \alpha\leq\sigma)$ of $M$ is \emph{pure} if $M_\alpha$ is pure in $M$ for every $\alpha\leq\sigma$. Using the lemma above, we can build a~pure filtration of $M$ with \emph{almost} all members $\lambda$-pure in $M$. This will be useful later on in Section~\ref{s:lambda}.

\begin{lemma}\label{l:purefilt} Let $\lambda$ be an infinite regular cardinal and $\kappa$ an infinite cardinal such that $|R|\leq \kappa$ and $\kappa^{<\lambda}=\kappa$. Let $M$ be a~$\kappa^+$-generated module. Then there exists a~pure filtration $(M_\alpha\mid \alpha\leq\kappa^+)$ of $M$ such that $|M_\alpha|\leq\kappa$ for each $\alpha<\kappa^+$; moreover, $M_\alpha$ is $\lambda$-pure in $M$ whenever $\alpha\leq\kappa^+$ is either non-limit or $\cf(\alpha)\geq\lambda$.
\end{lemma}

\begin{proof} We easily build the filtration by repeatedly using Lemma~\ref{l:purification} in non-limit steps and forming unions otherwise. For $\alpha<\kappa^+$ limit, the $\lambda$-purity of thus constructed $M_\alpha$ in $M$ follows immediately if $\cf(\alpha)\geq \lambda$; if $\cf(\alpha)<\lambda$, then $M_\alpha$ need not be $\lambda$-pure in $M$ but it is going to be pure nonetheless, hence the constructed filtration is, indeed, pure.
\end{proof}

A module $F$ is called \emph{$\lambda$-pure-injective} if it is injective relative to all $\lambda$-pure embeddings. This is equivalent to saying that each system consisting of $R$-linear equations with parameters in $F$ has a solution in $F$ provided that each of its subsystems of cardinality $<\lambda$ has one. Only a little is known about $\lambda$-pure-injective modules for uncountable $\lambda$. Recently, a few results in this direction have appeared in \cite{SarochTrlifaj19}. In what follows, we will use that, if $\lambda$ is a sufficiently large cardinal, many modules suddenly become $\lambda$-pure-injective. However, if $\lambda = \aleph_1$, we get nothing new as the next result (generalizing \cite{Megibben74} and the first footnote in \cite[pg.\ 221]{SarochTrlifaj19}) shows.

\begin{proposition} \label{p:aleph_1} Let $R$ be a ring. Each $\aleph_1$-pure-injective module is pure-injective.\footnote{It is plausible, that the conclusion remains valid if $\aleph_1$ is replaced by $\aleph_n$ for an arbitrary $n<\omega$.}
\end{proposition}

\begin{proof} Let $M$ be an $\aleph_1$-pure-injective module. We have to show that each pure short exact sequence $0\to M\to B\to N\to 0$ splits; otherwise put, that $\PExt_R^1(N, M) = 0$ for any module $N$. Since every module $N$ is the direct limit of an $\aleph_1$-directed system consisting of countably presented modules, in particular it is an $\aleph_1$-pure-epimorphic image of a direct sum consisting of countably presented modules, we just need to check that $\PExt_R^1(C,M) = 0$ for each countably presented module $C$. The rest follows from the $\aleph_1$-pure-injectivity of $M$.

Notice that $\PExt_R^1(N,M) = 0$ holds for any pure-projective module $N$, and hence also for any $N$ which is an $\aleph_1$-pure-epimorphic image of a pure-projective
module. In particular, this is the case of any $N$ which is Mittag-Leffler, cf.\ \cite[Theorem~3.14]{GobelTrlifaj21}. Let $0\to M\to B\overset{f}{\to} C\to 0$ be a pure short exact sequence with $C$ countably presented. Using \cite[Theorem~4.2(1)]{Saroch17} for $\theta = \aleph_0$, we obtain a Mittag-Leffler module $L$ such that $\Hom_R(L,f)$ is onto if and only if $f$ splits. However, we already know that $\PExt_R^1(L,M) = 0$, whence $\Hom_R(L,f)$ is surjective. This finishes our proof.
\end{proof}

Now we give a generalization of \cite[Lemma~6.20]{GobelTrlifaj21} for which we need an auxiliary lemma. Recall that, given a module $M$, a nonempty set~$I$ and a filter $\mathcal F$ on $I$, we denote by $M^I/\mathcal F$ the \emph{reduced power of $M$ modulo $\mathcal F$}, i.e.\ $M^I/\mathcal F = M^I/Z$ where $Z = \{(m_i)_{i\in I}\in M^I \mid \{i\in I\mid m_i = 0\}\in\mathcal F\}$. For instance, if $\mathcal F$ consists of cofinite subsets of $I$, then $Z = M^{(I)}$. In any case, there is the \emph{diagonal embedding} $\Delta\colon m\mapsto (m)_{i\in I}/\mathcal F$ of $M$ into the reduced power. This embedding is $\lambda$-pure whenever $\mathcal F$ is $\lambda$-complete, i.e.\ closed under the intersection of $<\lambda$ sets. Since each filter is $\aleph_0$-complete by definition, $\Delta$ is always a pure embedding.

\begin{lemma}\label{l:solext} Let $M$ be an $R$-module, $\lambda$ a regular infinite cardinal and $\Omega$ a set consisting of $R$-linear equations with parameters from $M$. Let us denote by $X$ the set of variables occurring in the equations from $\Omega$.

Assume that each subset $S$ of $\Omega$ such that $|S|<\lambda$ has a solution in $M$. Then there exists a $\lambda$-complete filter on the set $I = M^X$ such that $\Omega$ has a solution in the reduced power $P = M^I/\mathcal F$ into which $M$ is embedded using the diagonal embedding.
\end{lemma}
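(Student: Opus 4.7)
The plan is to use a standard ``reduced power compactness'' construction. For each subset $S\subseteq\Omega$ with $|S|<\lambda$, let $A_S\subseteq I = M^X$ be the set of all assignments $i\colon X\to M$ which solve every equation in $S$. By hypothesis, each $A_S$ is nonempty. The family $\{A_S\mid S\subseteq\Omega,\,|S|<\lambda\}$ has the $<\lambda$-intersection property: given fewer than $\lambda$ sets $A_{S_\alpha}$, their intersection contains $A_{\bigcup_\alpha S_\alpha}$, and the union $\bigcup_\alpha S_\alpha$ still has cardinality $<\lambda$ by the regularity of $\lambda$, so the intersection is nonempty.

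I would then let $\mathcal F$ be the upward closure of this family, namely $\mathcal F = \{Y\subseteq I\mid A_S\subseteq Y\text{ for some }S\subseteq\Omega\text{ with }|S|<\lambda\}$. Upward closure is immediate, while closure under intersections of $<\lambda$ elements follows from the $<\lambda$-intersection property established above (together with regularity of $\lambda$ again, to bound the union of the witnesses). The filter $\mathcal F$ is proper because $A_\varnothing = I\in\mathcal F$ and no $A_S$ is empty. Hence $\mathcal F$ is a proper $\lambda$-complete filter on $I$, which makes the diagonal embedding $\Delta\colon M\hookrightarrow P = M^I/\mathcal F$ a $\lambda$-pure embedding as required.

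To exhibit a solution of $\Omega$ in $P$, I assign to each variable $x\in X$ the element $p_x\in P$ represented by the tuple $(i(x))_{i\in I}\in M^I$ (the $x$-th projection $I = M^X\to M$). For an arbitrary equation $e\in \Omega$ of the form $\sum_{x} x\cdot r_x = m$ with $r_x\in R$ and $m\in M$, the assignment $x\mapsto p_x$ satisfies $e$ in $P$ precisely when the set $\{i\in I\mid \sum_x i(x)\cdot r_x = m\}$ lies in $\mathcal F$. But that set is exactly $A_{\{e\}}$, which belongs to $\mathcal F$ by construction. Thus every equation in $\Omega$ is satisfied in $P$ under $x\mapsto p_x$.

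The only mildly delicate point is the verification that $\mathcal F$ is genuinely $\lambda$-complete, which hinges on two uses of the regularity of $\lambda$: once to ensure that the union of $<\lambda$ subsets of $\Omega$, each of size $<\lambda$, is again of size $<\lambda$; and once implicitly to guarantee that the diagonal embedding coming from a $\lambda$-complete filter is indeed $\lambda$-pure (so that the ``$M$ is embedded via $\Delta$'' clause of the conclusion holds). Everything else reduces to unwinding the definition of the reduced power.
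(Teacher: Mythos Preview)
Your argument is correct and essentially identical to the paper's: your sets $A_S$ are the paper's $E_S$, the filter is generated the same way, and the solution $x\mapsto (i(x))_{i\in I}/\mathcal F$ is verified via the same observation that $A_{\{e\}}\in\mathcal F$ for each $e\in\Omega$. The only cosmetic difference is that you spell out the upward closure and the $\lambda$-pure property of the diagonal embedding explicitly, whereas the paper records the latter in the paragraph preceding the lemma.
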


\begin{proof} We define $\mathcal F$ as the filter on $I$ generated by the sets $E_S = \{e\in I \mid (e(x))_{x\in X}\mbox{ is a solution of }S\}$ where $S$ runs through subsets of $\Omega$ of cardinality $<\lambda$. Since each $E_S$ is nonempty by our assumption and $\lambda$ is infinite regular, we see that $\mathcal F$ is $\lambda$-complete. We claim that $x\mapsto (e(x))_{e\in I}/\mathcal F$ where $x$ runs through $X$ defines a~solution of $\Omega$ in $P$. By the property of reduced products (cf.\ \cite[Proposition~3.3.3]{Prest}), this amounts to showing that, for each $\eta\in\Omega$, \[\{e\in I \mid (e(x))_{x\in X}\mbox{ is a solution of }\eta\}\in\mathcal F.\] This trivially follows from the definition of~$\mathcal F$
\end{proof}

\begin{proposition}\label{p:cosyz} Let $\lambda$ be an infinite regular cardinal and $P$ a $\lambda$-pure-injective $R$-module. Then any cosyzygy module in an injective coresolution of $P$ is $\lambda$-pure-injective as well.
\end{proposition}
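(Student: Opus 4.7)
The plan is to proceed by induction on $n\geq 0$. The base case $n=0$ holds by hypothesis. For the inductive step, I will establish the following: whenever $0\to A\to E\to B\to 0$ is a short exact sequence with $E$ injective and $A$ a $\lambda$-pure-injective module, $B$ is $\lambda$-pure-injective as well. Applied to consecutive cosyzygies in an injective coresolution of $P$, this yields the proposition.

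To prove the claim, let $\Omega$ be a system of $R$-linear equations with parameters in $B$ such that every subsystem of cardinality $<\lambda$ has a solution in $B$. By Lemma~\ref{l:solext}, there is a $\lambda$-complete filter $\mathcal F$ on a set $I$ and a solution of $\Omega$ in the reduced power $B^I/\mathcal F$, with the diagonal embedding $\Delta_B\dd B\to B^I/\mathcal F$ being $\lambda$-pure. The strategy is to construct a retraction $\pi_B\dd B^I/\mathcal F\to B$ of $\Delta_B$, which applied to the reduced-power solution will transport it down to a genuine solution of $\Omega$ in $B$. It suffices to produce a retraction $\pi_E\dd E^I/\mathcal F\to E$ of $\Delta_E$ satisfying the additional condition $\pi_E(A^I/\mathcal F)\subseteq A$, since any such $\pi_E$ descends via the natural morphism of short exact sequences between $0\to A\to E\to B\to 0$ and its reduced-power counterpart to a map $\pi_B\dd B^I/\mathcal F\to B$ with $\pi_B\Delta_B=1_B$.

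The construction of $\pi_E$ uses the injectivity of $E$ twice. First, the $\lambda$-pure-injectivity of $A$ splits $\Delta_A$, yielding a retraction $\pi_A\dd A^I/\mathcal F\to A$. Composing with the inclusion $A\hookrightarrow E$ gives a map $A^I/\mathcal F\to E$, which extends along the monomorphism $A^I/\mathcal F\hookrightarrow E^I/\mathcal F$ by injectivity of $E$ to a map $\pi'_E\dd E^I/\mathcal F\to E$ satisfying $\pi'_E(A^I/\mathcal F)\subseteq A$. The defect from being a retraction, $\psi:=\pi'_E\Delta_E-1_E\dd E\to E$, vanishes on $A$ and therefore factors as $\psi=\alpha\circ p$ for some $\alpha\dd B\to E$, where $p\dd E\twoheadrightarrow B$ is the canonical projection. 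Invoking injectivity of $E$ once more, extend $\alpha$ along $\Delta_B\dd B\hookrightarrow B^I/\mathcal F$ to a map $\tau'\dd B^I/\mathcal F\to E$ and set $\tau:=\tau'\circ q$, where $q\dd E^I/\mathcal F\twoheadrightarrow B^I/\mathcal F$ is the canonical projection. A direct check gives $\tau\Delta_E=\psi$ and $\tau(A^I/\mathcal F)=0$, so the corrected map $\pi_E:=\pi'_E-\tau$ retracts $\Delta_E$ while preserving the condition $\pi_E(A^I/\mathcal F)\subseteq A$.

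The main technical issue I anticipate is the interaction between the $\lambda$-pure-injectivity of $A$, which only provides a retraction at the level of $A$, and the need for a compatible retraction at the level of the quotient $B$. This is resolved by the two-step correction above: the injectivity of $E$ is used first to install the compatibility $\pi'_E(A^I/\mathcal F)\subseteq A$, and then to repair the retraction defect without destroying this compatibility.
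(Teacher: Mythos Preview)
Your proof is correct and follows essentially the same approach as the paper: reduce to the first cosyzygy, use Lemma~\ref{l:solext} to find a solution of $\Omega$ in a reduced power $B^I/\mathcal F$ with $\mathcal F$ $\lambda$-complete, and then show that the diagonal embedding $\Delta_B$ splits so that the solution can be pulled back to $B$. The paper delegates the splitting of $\Delta_B$ to the argument of \cite[Lemma~6.20]{GobelTrlifaj21}; your two-step construction of $\pi_E$ (first extending $\pi_A$ by injectivity of $E$, then correcting the retraction defect through $B^I/\mathcal F$) is precisely an explicit version of that argument.
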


\begin{proof} It clearly suffices to prove that, if $P\subseteq E$ is an injective envelope of $P$, then $M = E/P$ is $\lambda$-pure-injective. Let $\Omega$ be a set of $R$-linear equations such that each of its subsets of cardinality $<\lambda$ has a solution in $M$. Using Lemma~\ref{l:solext}, we obtain a $\lambda$-complete filter on a set $I$ such that $\Omega$ has a solution in $M^I/\mathcal F$ into which $M$ is embedded using the diagonal embedding $\Delta_M$. Using the functoriality of the diagonal embedding (having $I$ and $\mathcal F$ fixed), see \cite[pg.\ 103]{Prest}, we get the following commutative diagram with exact rows

\[\xymatrix{0 \ar[r] & P \ar[r] \ar[d]^{\Delta_P} & E \ar[r] \ar[d]^{\Delta_E} & M \ar[r] \ar[d]^{\Delta_M} & 0 \\
0 \ar[r] & P^I/\mathcal F \ar[r] & E^I/\mathcal F \ar[r] & M^I/\mathcal F \ar[r] & 0.\!\!
}\]

The monomorphism $\Delta_P$ is split since it is $\lambda$-pure (because $\mathcal F$ is $\lambda$-complete) and $P$ is $\lambda$-pure-injective. We follow the proof of \cite[Lemma~6.20]{GobelTrlifaj21} with $\eta_P$ replaced by $\Delta$ and $F$ replaced by $M$ to deduce that $\Delta_M$ is a split monomorphism as well. Let us denote by $\psi\colon M^I/\mathcal F\to M$ the respective retraction. Then the $\psi$-image of the solution of $\Omega$ in $M^I/\mathcal F$ is a solution of $\Omega$ in $M$. Since $\Omega$ was arbitrary, we have proved that $M$ is $\lambda$-pure-injective.
\end{proof}

The above proposition allows us to prove the following

\begin{theorem}\label{t:closureprop} Let $\lambda$ be an infinite regular cardinal and let $\mathcal C$ be a class of $\lambda$-pure-injective modules. Put $\mathcal A = {}^\perp\mathcal C$. Then:
\begin{enumerate}
	\item $\mathcal A$ is closed under $\lambda$-pure-epimorphic images and $\lambda$-pure submodules;
	\item there exists a $\lambda$-pure-injective module $C$ such that $\mathcal A = {}^\perp C$.
\end{enumerate}
\end{theorem}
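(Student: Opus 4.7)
The plan is to prove the two assertions in turn, treating (1) first. Closure under $\lambda$-pure-epimorphic images is the easier half: given a $\lambda$-pure short exact sequence $0\to A\to B\to N\to 0$ with $B\in\mathcal A$ and any $C\in\mathcal C$, the $\lambda$-pure-injectivity of $C$ makes $\Hom(B,C)\to\Hom(A,C)$ surjective, so the first connecting map in the long exact $\Ext$-sequence vanishes and $\Ext^1(N,C)$ embeds into $\Ext^1(B,C)=0$; hence $N\in\mathcal A$.

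For the closure under $\lambda$-pure submodules, the same LES yields $\Ext^1(A,C)\hookrightarrow\Ext^2(N,C)$. Here Proposition~\ref{p:cosyz} is the crucial ingredient: iterating it, each cosyzygy $C^{(k)}$ of $C$ in an injective coresolution of $C$ is $\lambda$-pure-injective, so the closure $\widetilde{\mathcal C}$ of $\mathcal C$ under all such cosyzygies still lives inside the class of $\lambda$-pure-injectives. By dimension shift, $\Ext^2(N,C)\cong\Ext^1(N,C^{(1)})$, and this vanishes once we know that $N\in{}^\perp\widetilde{\mathcal C}$. The whole closure-under-submodules argument thus reduces to the assertion $\mathcal A={}^\perp\widetilde{\mathcal C}$, i.e.\ to showing that $\mathcal A$ is resolving.

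For part (2), I would use that ${}^\perp\mathcal C$ depends only on $\mathcal C$ up to isomorphism, which (via a standard cardinality bound for the Ext-generation problem) permits replacing $\mathcal C$ by a set $\mathcal C'$ with ${}^\perp\mathcal C'=\mathcal A$. Set $C=\prod_{D\in\mathcal C'}D$. The characterization of $\lambda$-pure-injectivity via $R$-linear systems of equations shows immediately that products of $\lambda$-pure-injectives are again $\lambda$-pure-injective (a system has a solution in a product iff it has a solution in each factor), so $C$ is $\lambda$-pure-injective, and ${}^\perp C=\bigcap_{D\in\mathcal C'}{}^\perp D=\mathcal A$.

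The main obstacle is the equality $\mathcal A={}^\perp\widetilde{\mathcal C}$ invoked above. The inclusion $\supseteq$ is immediate, but the reverse asks for the vanishing of every $\Ext^k(M,C)$ with $k\geq 1$, $M\in\mathcal A$, $C\in\mathcal C$. My plan for this is a simultaneous induction that interleaves the already-proved closure under $\lambda$-pure-epimorphic images with the dimension shift provided by Proposition~\ref{p:cosyz}: running the LES argument against $C^{(k-1)}$ (itself $\lambda$-pure-injective) makes the connecting map at $\Hom$-level zero by purity, allowing the vanishing of $\Ext^1$ against $\mathcal C$ to be propagated upward to the vanishing of $\Ext^1$ against every cosyzygy, and thus to the full resolving property of $\mathcal A$.
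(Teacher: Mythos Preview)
Your argument for closure under $\lambda$-pure-epimorphic images is fine and coincides with the paper's. The difficulty is the other half of (1), and there your reduction goes astray.

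You correctly observe that $\Ext^1(A,C)\hookrightarrow\Ext^2(N,C)\cong\Ext^1(N,C^{(1)})$, and then reduce to the claim $\mathcal A={}^\perp\widetilde{\mathcal C}$, i.e.\ that $\mathcal A$ is resolving. But you never prove this, and your ``simultaneous induction'' is circular: running the long exact sequence against $C^{(k-1)}$ gives $\Ext^1(N,C^{(k-1)})\hookrightarrow\Ext^1(B,C^{(k-1)})$, and to conclude you would need $\Ext^1(B,C^{(k-1)})=\Ext^k(B,C)=0$ for $B\in\mathcal A$ --- which is exactly the resolving statement you are trying to establish. There is no reason to expect ${}^\perp\mathcal C$ to be resolving for an arbitrary class $\mathcal C$ of $\lambda$-pure-injectives; nothing in the hypotheses forces $\mathcal C$ to be closed under cosyzygies. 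The paper avoids this detour entirely: to show $\Ext^1(A,C)=0$ directly, take $0\to C\to E\overset{\pi}{\to} M\to 0$ with $E$ injective; by Proposition~\ref{p:cosyz} $M$ is $\lambda$-pure-injective, so any $f:A\to M$ extends to $g:B\to M$; then $g$ lifts through $\pi$ because $B\in{}^\perp C$, and restricting gives a lift of $f$. No higher Ext groups are needed.

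For (2), the phrase ``a standard cardinality bound for the Ext-generation problem'' hides a genuine difficulty. It is \emph{not} known in ZFC that an arbitrary class ${}^\perp\mathcal C$ equals ${}^\perp\mathcal C'$ for some set $\mathcal C'$; this is essentially the question of whether every cotorsion pair is cogenerated by a set. The paper's argument uses part~(1) in an essential way: pick (via \cite[Theorem~2.34]{AdamekRosicky}) a regular $\kappa\geq\lambda$ such that every module is a $\kappa$-directed union of $<\kappa$-presented $\lambda$-pure submodules, choose a \emph{set} $\mathcal D\subseteq\mathcal C$ so that ${}^\perp\mathcal D$ and $\mathcal A$ agree on $<\kappa$-presented modules, and set $C=\prod_{D\in\mathcal D}D$. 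Then for any $B\in{}^\perp C$, its $<\kappa$-presented $\lambda$-pure pieces lie in ${}^\perp C$ by the $\lambda$-pure-submodule closure from (1), hence in $\mathcal A$ by the choice of $\mathcal D$; finally $B\in\mathcal A$ by closure under $\lambda$-pure-epimorphic images. Without (1), there is no evident way to cut $\mathcal C$ down to a set.
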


\begin{proof} $(1)$. The closure under $\lambda$-pure-epimorphic images follows immediately from the fact that $\mathcal C$ consists of $\lambda$-pure-injective modules. On the other hand, let $B$ be a $\lambda$-pure submodule of $A\in\mathcal A$ and let $P\in\mathcal C$ be arbitrary. Consider a short exact sequence \[0\to P\to E \overset{\pi}{\to} M\to 0\] with $E$ injective. We know that $M$ is $\lambda$-pure-injective by Proposition~\ref{p:cosyz}. It follows that each homomorphism $f\in\Hom_R(B, M)$ can be extended to $g\in\Hom_R(A, M)$. Since $A\in\mathcal A$ and $P\in\mathcal C$, we can factorize $g$ through $\pi$ to obtain a homomorphism whose restriction to $B$ is a factorization of $f$ through $\pi$. This shows that $B\in\mathcal A$.

$(2)$. Recall that, by \cite[Theorem~2.34]{AdamekRosicky}, there exists a regular cardinal $\kappa\geq\lambda$ such that each module is the directed union of a $\kappa$-directed system consisting of $<\kappa$-presented $\lambda$-pure submodules. Let $\mathcal D$ be a~subset of $\mathcal C$ such that ${}^\perp \mathcal D$ contains exactly the same $<\kappa$-presented modules as $\mathcal A = {}^\perp\mathcal C$ does; such a~set $\mathcal D$ exists since the full subcategory of $\ModR$ consisting of $<\kappa$-presented modules is skeletally small. Put $C = \prod_{D\in\mathcal D} D$. Then clearly $\mathcal A \subseteq {}^\perp C$.

To show the other inclusion, let $B\in {}^\perp C$ be arbitrary. Then $B$ is the directed union of a $\kappa$-directed system $\mathcal S$ consisting of $<\kappa$-presented $\lambda$-pure submodules. By $(1)$, we know that ${}^\perp C$ is closed under $\lambda$-pure submodules, whence $\mathcal S\subseteq {}^\perp C$ and consequently $\mathcal S\subseteq\mathcal A$ since $\mathcal S$ consists of $<\kappa$-presented modules. It follows by $(1)$ that $B\in\mathcal A$ since the canonical map $\bigoplus_{M\in\mathcal S} M\to B$ is a $\lambda$-pure-epimorphism (it is even $\kappa$-pure since $\mathcal S$ is $\kappa$-directed).
\end{proof}

Notice that it follows from the proof that each class of the form ${}^\perp\mathcal C$, where $\mathcal C$ consists of $\lambda$-pure-injective modules, is uniquely determined by its $<\kappa$-presented members (for the cardinal $\kappa$ from the proof). In fact, if we denote by $\mathcal S$ the representative set of $<\kappa$-presented modules in ${}^\perp\mathcal C$, then ${}^\perp\mathcal C$ consists precisely of direct limits of $\kappa$-directed systems comprising of members from $\mathcal S$ (and monomorphisms between them). In particular, as in the special case when $\lambda = \aleph_0$, cotorsion pairs which are cogenerated by a~class of $\lambda$-pure-injective modules form a~set.

We finish this section with two applications of this result. Another one will be given at the end of Section 6. First, it is known that every cotorsion pair cogenerated by a class of pure-injective modules is generated by a set (this follows from \cite[Theorem 8]{EklofTrlifaj}). For cotorsion pairs cogenerated by $\lambda$-pure-injective modules we get the following slightly weaker result:

\begin{corollary}\label{c:CotorsionPairCogeneratedLambdaPureInjectives}
Let $\lambda$ be an infinite regular cardinal and $(\mathcal F,\mathcal C)$ be a~cotorsion pair cogenerated by a~class of $\lambda$-pure-injective modules. Assume, moreover, that
\begin{enumerate}
\item[$(\dagger)$] $\mathcal F$ is closed under direct limits\footnote{This condition is satisfied, e.g., if $\mathcal F$ is closed under taking cokernels of pure monomorphisms, cf.\ the proof of \cite[Proposition~3.1]{G17}.}.
\end{enumerate}
Then $\mathcal F$ is deconstructible. In particular, $(\mathcal F,\mathcal C)$ is complete.
\end{corollary}

\begin{proof}
Let $\kappa$ be an infinite cardinal satisfying $\kappa \geq |R|$ and $\kappa^{< \lambda}=\kappa$. Let $F \in \mathcal F$. We prove that $F$ is filtered by $\leq \kappa$-generated modules in $\mathcal F$. Suppose that $F=\{x_\gamma \mid \gamma < \mu\}$ for some cardinal $\mu$ greater than $\kappa$. We construct the filtration $(F_\alpha \mid \alpha \leq \mu)$ of $F$ satisfying $x_\alpha \in F_{\alpha+1}$, $\frac{F_{\alpha+1}}{F_\alpha}, \frac{F}{F_\alpha} \in \mathcal F$ and $\left|\frac{F_\alpha+1}{F_\alpha}\right| \leq \kappa$ for each $\alpha < \mu$.

Set $F_0=0$. If $\alpha$ is successor, say $\alpha = \beta+1$, apply Lemma \ref{l:purification} to get a $\lambda$-pure submodule $\frac{F_\alpha}{F_\beta}$ of $\frac{F}{F_\beta}$ containing $x_\beta+F_\beta$ and with cardinality less than or equal to~$\kappa$. By the preceding theorem, $\frac{F_\alpha}{F_\beta}, \frac{F}{F_\alpha} \in \mathcal F$. 

If $\alpha$ is limit, set $F_\alpha = \bigcup_{\gamma < \alpha}F_\gamma$. Then $F_\alpha \in \mathcal F$ by Eklof Lemma and $\frac{F}{F_\alpha} \in \mathcal F$ by~$(\dagger)$.
\end{proof}

Let us note that the extra assumption $(\dagger)$ in the corollary above is automatically satisfied in any model of ZFC without $0^\sharp$ by Proposition~\ref{p:0sharp}. Furthermore, if we weaken the assumption on the cotorsion pair $(\mathcal F, \mathcal C)$ to ``$\mathcal F$ is closed under $\lambda$-pure submodules and $\lambda$-pure-epimorphic images'', then $(\dagger)$ turns out to be necessary for the deconstructibility of $\mathcal F$ in the absence of $0^\sharp$ by \cite[Corollary~3.5]{BS23}.

\smallskip

Another corollary of Theorem~\ref{t:closureprop} constitutes a~generalization of \cite[Theorem~1.6]{Cox22}.

\begin{corollary}\label{c:Cox} Assume that there is a proper class of strongly compact cardinals. Let $\mathcal S$ be a set of modules. Then there exists a regular cardinal $\lambda$ such that ${}^\perp\mathcal S$ is closed under $\lambda$-pure-epimorphic images and $\lambda$-pure submodules.
\end{corollary}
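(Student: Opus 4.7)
The plan is to deduce Corollary~\ref{c:Cox} as a direct application of Theorem~\ref{t:closureprop}(1), the key point being to arrange that every module of the given set $\mathcal S$ becomes $\lambda$-pure-injective for a suitable regular cardinal $\lambda$.

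First, since $\mathcal S$ is a set (not a proper class) of modules, the cardinal $\kappa = \sup\{|M| \mid M \in \mathcal S\} + |R| + \aleph_0$ is well-defined. Using the hypothesis that there is a proper class of strongly compact cardinals, I pick a strongly compact cardinal $\lambda$ with $\lambda > \kappa$; in particular $\lambda > |R|$ and $|M| < \lambda$ for every $M \in \mathcal S$. Such a $\lambda$ is an uncountable regular cardinal which is $\mathcal L_{\lambda\omega}$-compact, so by the cited \cite[Proposition~2.1]{Saroch20} (used already in the footnote inside the proof of Proposition~\ref{p:largecard}), every module of cardinality strictly less than $\lambda$ is $\lambda$-pure-injective. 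Consequently every member of $\mathcal S$ is $\lambda$-pure-injective.

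Once this is established, the corollary follows immediately from Theorem~\ref{t:closureprop}(1) applied with $\mathcal C = \mathcal S$: the class $\mathcal A = {}^\perp \mathcal S$ is closed under $\lambda$-pure-epimorphic images and under $\lambda$-pure submodules.

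No step here should pose genuine difficulty; the only conceptual ingredient is recognizing that strong compactness allows us to turn the cardinality bound on $\mathcal S$ into $\lambda$-pure-injectivity for all of its members, after which Theorem~\ref{t:closureprop}(1) does all the work. If there is any potential subtlety to be careful about, it is making sure the chosen $\lambda$ simultaneously exceeds $|R|$ and bounds the cardinalities of every $M \in \mathcal S$, which is automatic because $\mathcal S$ is a set rather than a proper class.
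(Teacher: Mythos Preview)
Your proof is correct and follows essentially the same route as the paper: choose a strongly compact cardinal $\lambda$ larger than the cardinalities of all modules in $\mathcal S$, invoke \cite[Proposition~2.1]{Saroch20} to conclude that each module in $\mathcal S$ is $\lambda$-pure-injective, and then apply Theorem~\ref{t:closureprop}(1).
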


\begin{proof} By our assumption, we can find a strongly compact cardinal $\lambda$ such that each module in $\mathcal S$ has cardinality $<\lambda$. Consequently, by \cite[Proposition~2.1]{Saroch20}, each module in $\mathcal S$ is $\lambda$-pure-injective. The rest follows from Theorem~\ref{t:closureprop}$(1)$.
\end{proof}

\section{Completeness of the Gorenstein projective cotorsion pair and $\lambda$-pure-injective modules}
\label{sec:5}

In this section we deal with the completeness of the Gorenstein projective cotorsion pair. We prove that if every projective module is $\lambda$-pure-injective for some infinite regular cardinal $\lambda$, then $\mathfrak{GP} = (\GProj,\GProj^\perp)$ is complete. Moreover, if we assume that there exist many compact cardinals, then we show that all projective modules are $\lambda$-pure-injective for some $\lambda$ and, consequently, $\mathfrak{GP}$ is complete. In particular, one cannot disprove in ZFC that $\mathfrak{GP}$ is complete unless it turns out that the existence of these large cardinals contradicts the axioms of ZFC, which is extremely unlikely.

The following lemma serves as an important building block in our main result on the completeness of the Gorenstein projective cotorsion pair.

\begin{lemma} \label{l:aux}
Assume that $\lambda$ is an infinite regular cardinal and $\mu$ is an infinite cardinal such that $|R|\leq\mu = \mu^{<\lambda}$. Let $\mathcal D\colon 0\to N\overset{\subseteq}{\to} \bigoplus_{\delta\in\Delta}P_\delta\overset{\pi}{\to} N\to 0$ be a~short exact sequence of $R$-modules where $P_\delta$ is $\mu$-generated for each $\delta\in\Delta$. Let $X$ be a~subset of $N$ of cardinality $\leq \mu$.

Then there exists an exact subobject $\mathcal F\colon 0\to A\overset{\subseteq}{\to} \bigoplus_{\delta\in E}P_\delta \to A\to 0$ of $\mathcal D$ such that $|E|\leq\mu$, $X\subseteq A$ and $A$ is $\lambda$-pure in $N$.
\end{lemma}

\begin{proof}
We construct $\mathcal F$ as the union of an increasing chain \[(\mathcal F_\alpha\colon 0\to A_\alpha\overset{\subseteq}{\to}\bigoplus_{\delta\in E_\alpha}P_\delta\to A_\alpha\to 0\mid 0<\alpha<\lambda)\] of exact subobjects of $\mathcal D$. We start by letting $A_0$ be the submodule of $N$ generated by $X$ and $E_0=\varnothing$. (Notice that we do not define any $\mathcal F_0$ though.)

Let $0<\alpha<\lambda$ and assume that $A_\beta$ and $E_\beta$ are defined for all $\beta<\alpha$ in such a~way that $|E_\beta|\leq\mu$ holds; then $|A_\beta|\leq\mu$ holds as well. Suppose first that $\alpha = \gamma+1$ for some ordinal $\gamma$, i.e.\ $\alpha$ is non-limit. Using Lemma \ref{l:purification} we can find find $A_\gamma^\prime\supseteq A_\gamma$ of cardinality $\leq\mu$ such that $A_\gamma^\prime$  is $\lambda$-pure in $N$.

Next, we find $D_0\subseteq\Delta$ containing $E_\gamma$, of cardinality $\leq\mu$ and such that $A_\gamma^\prime\subseteq\Img \pi\restriction\bigoplus_{\delta\in D_0}P_\delta =:A_\gamma^0$. Further, we find $D_1\subseteq\Delta$ containing $D_0$, of cardinality $\leq\mu$ and such that $A_\gamma^0 \subseteq \bigoplus_{\delta\in D_1}P_\delta$ and $A_\gamma^1 := \Img \pi\restriction\bigoplus_{\delta\in D_1}P_\delta\supseteq N\cap \bigoplus_{\delta\in D_0}P_\delta$. We continue by finding $D_2$ containing $D_1$, of cardinality $\leq\mu$ and such that $A_\gamma^1\subseteq \bigoplus_{\delta\in D_2}P_\delta$ and $A_\gamma^2:= \Img \pi\restriction\bigoplus_{\delta\in D_2}P_\delta\supseteq N\cap \bigoplus_{\delta\in D_1}P_\delta$, and so on. Finally, we put $E_\alpha = \bigcup_{n<\omega}D_n$ and $A_\alpha = \bigcup_{n<\omega}A_\gamma^n$, thus obtaining a short exact sequence $\mathcal F_\alpha\colon 0\to A_\alpha\overset{\subseteq}{\to} \bigoplus_{\delta\in E_\alpha}P_\delta\to A_\alpha\to 0$ which is a~subobject of $\mathcal D$.

For limit $\alpha<\lambda$, we define $\mathcal F_\alpha$ as the directed union of all the $\mathcal F_\beta$ where $\beta<\alpha$. Finally, we put $\mathcal F = \bigcup_{\alpha<\lambda}\mathcal F_\alpha$.

\smallskip

It is clear that $\mathcal F$ has the form $0\to A\overset{\subseteq}{\to}\bigoplus_{\delta\in E}P_\delta\to A\to 0$ where $|E|\leq\mu$. It remains to check that $A$ is $\lambda$-pure in $N$. So let $\mathcal S$ be a~system of cardinality $<\lambda$ consisting of $R$-linear equations with parameters in $A$ and assume that $\mathcal S$ has a~solution in $N$. Since $\lambda$ is a~regular cardinal, there exists $\alpha<\lambda$ such that $A_\alpha$ contains all the parameters from $\mathcal S$. By the construction, $\mathcal S$ has a solution in $A_{\alpha+1}\subseteq A$. It follows that $A$ is $\lambda$-pure in $N$.
\end{proof}

We can now prove the main result of this section:

\begin{theorem}\label{t:complete}
Assume that $\lambda$ is a regular infinite cardinal and $\mathcal C$ is a~class consisting of $\lambda$-pure-injective modules such that $\Proj\subseteq\mathcal C$. Then $\GProj_{\mathcal C}$ is a~deconstructible class of modules.\footnote{Sean Cox has recently proved this in \cite{Cox21} for an arbitrary class $\mathcal C$ assuming Vop\v enka's principle.} In particular, $\mathfrak H = (\GProj_{\mathcal C}, \GProj_{\mathcal C}^\perp)$ is a complete hereditary cotorsion pair with $\mathcal H^\perp$ being a resolving class.

In particular, $\GProj$ is deconstructible and $(\GProj, \GProj^\perp)$ is a complete cotorsion pair provided that projective modules are $\lambda$-pure-injective.
\end{theorem}

\begin{proof} In the view of Lemma~\ref{l:PropertiesCGorenstein} and Theorem~\ref{t:CotPair}, we are left to show that $\GProj_{\mathcal C}$ is deconstructible and $\mathfrak H$ is complete.

Let $\mu$ be an arbitrary infinite cardinal such that $|R|\leq \mu = \mu^{<\lambda}$. We will show that $\mathfrak{H}$ is generated by a representative set of $\mu$-presented (i.e.\ of cardinality $\leq\mu$) strongly $\mathcal C$-Gorenstein projective modules. Then the result will follow from \cite[Theorem~6.11 and Theorem~7.13]{GobelTrlifaj21}. Since each module in $\GProj_{\mathcal C}$ is a~direct summand in a strongly $\mathcal C$-Gorenstein projective module, it is enough to show that each strongly $\mathcal C$-Gorenstein projective module $M$ is filtered by $\mu$-presented strongly $\mathcal C$-Gorenstein projective modules.

For the sake of non triviality, assume that $\kappa = |M|>\mu$. Fix a short exact sequence $\mathcal E\colon 0\to M\overset{\subseteq}{\to} Q\to M\to 0$ with $Q$ projective, and a decomposition $Q = \bigoplus_{\gamma\in\Gamma}P_\gamma$ where each $P_\gamma$ is countably generated. Let us also fix a~generating set $\{m_\alpha\mid \alpha<\kappa\}$ of $M$. We recursively build a filtration $(\mathcal E_\alpha\colon 0\to M_\alpha\to Q_\alpha\to M_\alpha\to 0\mid \alpha\leq\kappa)$ of $\mathcal E$ with the following properties for each $\alpha<\kappa$:
\begin{enumerate}
	\item[(i)] $\mathcal E_\alpha$ is short exact and $Q_\alpha = \bigoplus_{\gamma\in\Gamma_\alpha}P_\gamma$ for some $\Gamma_\alpha\subseteq\Gamma$;
	\item[(ii)] $M_{\alpha+1}/M_\alpha$ is a $\mu$-presented strongly $\mathcal C$-Gorenstein projective module;
	\item[(iii)] $M/M_{\alpha}$ is a strongly $\mathcal C$-Gorenstein projective module;
  \item[(iv)] $m_\alpha\in M_{\alpha+1}$.
\end{enumerate}

We start with $\mathcal E_0$ consisting of trivial modules. Assume that $\mathcal E_\beta$ is already constructed for all $\beta<\alpha$. If $\alpha$ is a limit ordinal, we simply put $\mathcal E_\alpha = \bigcup_{\beta<\alpha}\mathcal E_\beta$. The module $M_\alpha$ is strongly $\mathcal C$-Gorenstein projective by (ii) and Eklof Lemma. As a~result, each homomorphism $h\colon M_\alpha \to C$, where $C\in\mathcal C$, extends to $Q_\alpha$ and subsequently to $Q$, in particular to $M$. This shows that $M/M_\alpha\in {}^\perp\mathcal C$, whence (iii), and so (i)--(iv), is satisfied for~$\alpha$.

Now let $\alpha = \delta + 1$ for some $\delta$. Consider the short exact sequence $\mathcal D\colon 0\to M/M_\delta\to \bigoplus_{\gamma\in\Gamma\setminus\Gamma_\delta}P_\gamma\to M/M_\delta \to 0$ and put $N = M/M_\delta$. Using Lemma~\ref{l:aux}, we find an exact subobject $\mathcal F\colon 0\to A\overset{\subseteq}{\to} \bigoplus_{\gamma\in E}P_\gamma \to A\to 0$ of $\mathcal D$ such that $|E|\leq\mu$, $m_\delta+M_\delta\in A$ and $A$ is $\lambda$-pure in~$N$.

Since all modules in $\mathcal C$ are $\lambda$-pure-injective, $N/A\in {}^\perp\mathcal C$ by Theorem \ref{t:closureprop}, and thus it is strongly $\mathcal C$-Gorenstein projective. It follows that $A$ is strongly $\mathcal C$-Gorenstein projective as well since $\GProj_{\mathcal C}$ is closed under kernels of epimorphisms.

We define the short exact sequence $\mathcal E_\alpha$ as the preimage of $\mathcal F$ in the canonical projection from $\mathcal E$ onto $\mathcal D$. The conditions (i)--(iv) then immediately follow: indeed, we have $\Gamma_\alpha = \Gamma_\delta\cup E$, $M/M_\alpha\cong N/A$, and $M_{\alpha}/M_{\delta}\cong A$ is a~$\mu$-presented module since $|E|\leq\mu$.
\end{proof}

At this point, it is worth noting that the proof above relies on the special form of strongly $\mathcal C$-Gorenstein projective modules to overcome the main technical obstacle which arises when working with $\lambda$-pure submodules for $\lambda$ uncountable: the fact that the union of a short (i.e.\ of length $<\lambda$) ascending chain of $\lambda$-pure submodules need not be $\lambda$-pure. Notice that we do not claim that the submodules $M_\alpha$ are $\lambda$-pure in $M$ for limit ordinals $\alpha$. Nevertheless, we are able to derive the crucial property that $M/M_\alpha$ is strongly $\mathcal C$-Gorenstein projective.

\smallskip

We are going to show that, using an additional set-theoretical assumption, we can assure that there exists an infinite regular cardinal $\lambda$ such that all projective modules are $\lambda$-pure-injective.

Given an uncountable regular cardinal $\mu$, a cardinal $\lambda$ is called \emph{$\mathcal L_{\mu\omega}$-compact} if every $\lambda$-complete filter (on any set) can be extended to a~$\mu$-complete ultrafilter. Note that necessarily $\lambda\geq\mu$ and that a cardinal greater than an $\mathcal{L}_{\mu\omega}$-compact cardinal is again $\mathcal L_{\mu\omega}$-compact.

\begin{proposition} \label{p:largecard}
Assume that, for each cardinal $\kappa$, there is an uncountable regular cardinal $\mu > \kappa$ for which there exists an $\mathcal L_{\mu\omega}$-compact cardinal. Then, for any ring, there exists an infinite regular cardinal~$\lambda$ such that each projective module is $\lambda$-pure-injective.
\end{proposition}

\begin{proof}
By \cite[Theorem~3.4]{Saroch15}, there exists a free $R$-module $F$ such that all projective $R$-modules belong to $\Prod(F)$. Consider a regular $\mathcal L_{\mu\omega}$-compact cardinal $\lambda$ with $\mu>|F|$. It follows from \cite[Proposition~2.1]{Saroch20} that a system of $R$-linear equations with parameters in $F$ has a solution in $F$ provided that each its subsystem of cardinality $<\lambda$ has a solution in $F$, i.e.\ $F$ is a $\lambda$-pure-injective module.\footnote{In fact, each module $M$ with $|M|<\mu$ is $\lambda$-pure-injective in this case by the cited proposition; in analogy with the well-known result that finite (not just finitely generated, but finite!) modules are pure-injective.} Consequently all modules in $\Prod(F)$, projective ones in particular, are $\lambda$-pure-injective as well.
\end{proof}

\begin{remark}\label{r:largecard} The large cardinal assumption of the proposition above is satisfied, if there exists a proper class of strongly compact cardinals, i.e.\ regular uncountable cardinals $\kappa$ which are $\mathcal L_{\kappa\omega}$-compact. This, in turn, follows from the famous (and much stronger) Vop\v{e}nka's Principle.
\end{remark}

We are ready to provide the promised consistency result concerning the completeness of Gorenstein projective cotorsion pair over any ring. Unfortunately, we were unable to get rid of the additional large-cardinal assumption. It might be the case that the result is independent of ZFC, similarly as what Whitehead problem turned out to be. This Gorenstein setting seems to be a lot more intricate though.

\begin{corollary}\label{c:complete}
Assume that, for each cardinal $\kappa$, there is an uncountable regular cardinal $\mu > \kappa$ for which there exists an $\mathcal L_{\mu\omega}$-compact cardinal. Then, for any ring, the class $\GProj$ is deconstructible and thus the cotorsion pair $(\GProj,\GProj^\perp)$ is complete.
\end{corollary}

\begin{proof}
Follows immediately from Proposition~\ref{p:largecard} and Theorem~\ref{t:complete}.
\end{proof}

If we are interested only in $\ModR$ for a~particular ring $R$, we can also use the following proposition.

\begin{proposition} \label{p:onestrongcomp} Let $R$ be a ring and $\lambda$ be a~strongly compact cardinal such that $|R|<\lambda$. Then all projective $R$-modules are $\lambda$-pure-injective.
\end{proposition}

\begin{proof} It is enough to prove the result for free modules. By \cite[Proposition~2.1]{Saroch20}, all modules of cardinality strictly less than $\lambda$ are $\lambda$-pure-injective. Given any free module $M$, it is a~$\lambda$-directed union of a~system of free submodules of cardinality $<\lambda$: indeed, we can assume that $M = R^{(I)}$ and consider the system $(R^{(J)} \mid J \in \mathcal P_\lambda(I))$ of submodules of $M$ where we put $\mathcal P_\lambda(I) = \{J \subseteq I \mid |J|<\lambda\}$. In particular, $M$ embeds $\lambda$-purely into the ultraproduct $U = \prod_{J\in\mathcal P_\lambda(I)} R^{(J)}/\mathcal U$ where $\mathcal U$ is a~$\lambda$-complete ultrafilter in $\mathcal P_\lambda(I)$ extending the $\lambda$-complete filter in $\mathcal P_\lambda(I)$ with the basis \[\mathcal B = \{\uparrow\! J \mid J\in \mathcal P_\lambda(I)\}\mbox{ where }\uparrow\! J = \{K\in \mathcal P_\lambda(I)\mid J\subseteq K\};\] here, we use that $\lambda$ is strongly compact and (the proof of) \cite[Theorem~3.3.2]{Prest}\footnote{The fact that our directed system is even $\lambda$-directed translates into the $\lambda$-completeness of the (ultra)filters considered therein, and also into the $\lambda$-purity of the resulting embedding of $M$.}.

By \cite[Theorem~II.3.8]{EM}, $U$ is a~free module. In particular, the canonical epimorphism $\prod_{J\in\mathcal P_\lambda(I)} R^{(J)} \to U$ splits. Since each (free) module $R^{(J)}$ has cardinality strictly less than $\lambda$, it is $\lambda$-pure-injective; consequently, the product $\prod_{J\in\mathcal P_\lambda(I)} R^{(J)}$ and $U$ are $\lambda$-pure-injective as well.

Finally, the $\lambda$-pure embedding of $M$ into $U$ splits by \cite[Theorem~3.3]{SarochTrlifaj19}, and so $M$ is $\lambda$-pure-injective, too.
\end{proof}

\medskip

\section{Existence of enough $\lambda$-pure-injective modules}
\label{s:lambda}

In the last section of this paper, we investigate the behavior of $\lambda$-pure-injective modules under a set-theoretic assumption which is inconsistent with the one from Proposition~\ref{p:largecard}, namely under the assumption that \[0^{\sharp}\mbox{ does not exist.}\eqno{(*)}\]

The assumption $(*)$ is consistent with ZFC. It follows from the axiom of constructibility, V = L. In fact, $(*)$ is its weaker form, sort of: an equivalent restatement of $(*)$ says that the classes V and L are, in a precise sense, close to each other but not necessarily equal. Consequently, the assumption $(*)$ does not imply the generalized continuum hypothesis (GCH) as V = L does, it yields only its weaker form, the singular cardinal hypothesis (SCH), which we capitalize on in what follows. For more information on $(*)$, we refer to \cite[\S 18]{Jech} or \cite[VI.3.16]{EM}.

The assumption $(*)$ was recently used in (the proof of) \cite[Proposition~1.5]{SarochTrlifaj19} to show that there does not exist a regular uncountable cardinal $\lambda$ such that the right $R$-module $R^{(\omega)}$ embeds $\lambda$-purely into a $\lambda$-pure-injective module, provided that $R$ is not right perfect. In particular, the free module $R^{(\omega)}$ is not $\lambda$-pure-injective for any regular uncountable cardinal $\lambda$ over such a ring, contradicting the conclusion of Proposition~\ref{p:largecard}.

Before stating our last main result, which generalizes \cite[Proposition~1.5]{SarochTrlifaj19}, let us recall that the category $\Modr R$ is said to have \textit{enough $\lambda$-pure-injective objects} if each $M\in\Modr R$ can be $\lambda$-purely embedded into a $\lambda$-pure-injective module. Also note that Theorem~\ref{t:0sharp} below is inconsistent with the main result from \cite{Hos19} which is flawed, mainly because of the nonsensical Lemma~2.8 therein.

Moreover, let us recall some set-theoretical notions which will be used in the proof of the next result. Let $\delta$ be a limit ordinal with uncountable cofinality. A~subset $C\subseteq\delta$ is called \textit{closed unbounded} if $\sup C=\delta$ and $\sup Y \in C$ holds for each $Y \subseteq C$ such that $\sup Y \in \delta$. A subset $E \subseteq \delta$ is \textit{stationary} in $\delta$ if $E \cap C \neq \varnothing$ for each closed unbounded subset $C$ of $\delta$. Finally, a subset $E \subseteq \delta$ is called \textit{non-reflecting} if $E \cap \gamma$ is not stationary in $\gamma$ whenever $\gamma<\delta$ is a limit ordinal of uncountable cofinality. For instance, if $\kappa$ is an infinite regular cardinal, then the set $E(\kappa) = \{\alpha<\kappa^+\mid \cf(\alpha) = \kappa\}$ is stationary and non-reflecting in $\kappa^+$. On the other hand, if $\kappa$ is uncountable, one needs an additional set-theoretic assumption to get a~non-reflecting stationary subset of $\kappa^+$ consisting of limit ordinals with (some prescribed infinite) cofinality $<\kappa$. The non-existence of $0^\sharp$ can be one such assumption.

\begin{proposition} \label{p:0sharp} \emph{(}$0^{\sharp}$ does not exist\emph{)} Let $R$ be a ring and $\lambda$ be an infinite regular cardinal. Then each $\lambda$-pure-injective module is pure-injective.
\end{proposition}

\begin{proof} Let $B$ be a $\lambda$-pure-injective module. For the sake of nontriviality, we can assume that $\lambda$ is uncountable. Put \[\mathcal A = \{A\in \Modr R \mid \PExt^1_R(A,B) = 0\}.\] To show that $B$ is pure-injective, we verify that $\mathcal A = \Modr R$.

From the properties of the bifunctor $\PExt^1_R$ and the $\lambda$-pure-injectivity of $B$, it follows that
\begin{enumerate}
	\item $\mathcal A$ contains all finitely presented modules (since these are pure-projective),
	\item $\mathcal A$ is closed under arbitrary direct sums and direct summands,
	\item $\mathcal A$ is closed under pure filtrations, i.e.\ if $(A_\alpha\mid\alpha\leq\sigma)$ is a~pure $\mathcal A$-filtration, then $A_\sigma\in\mathcal A$,
	\item $\lambda$-pure-epimorphic images of modules from $\mathcal A$ are again in $\mathcal A$.
\end{enumerate}
Note that $(3)$ follows from $\mathcal A$ being closed under pure extensions by the proof of the usual Eklof Lemma. We prove that $\mathcal A = \Modr R$ by checking that $\mathcal A$ is closed under arbitrary direct limits. It is well-known that to show this, it is enough to check that $\mathcal A$ is closed under direct limits of continuous well-ordered directed systems of the form $\mathcal N = (N_\alpha, g_{\alpha\beta}\colon N_\beta \to N_\alpha\mid \beta<\alpha<\theta)$ where $\theta$ is a~regular infinite cardinal. So let such a~directed system $\mathcal N$ of modules from $\mathcal A$ be given.

Put $N = \bigoplus_{\alpha<\theta} N_\alpha$ and $C = \varinjlim \mathcal N$. In particular, $N\in\mathcal A$ by $(2)$ above. Moreover, the direct limit $C$ is a~$\theta$-pure-epimorphic image of $N$, whence $C\in\mathcal A$ holds as well \emph{whenever} $\theta\geq \lambda$ (by the part $(4)$ above). In the rest of the proof, we can thus assume that $\theta<\lambda$ and, for the sake of nontriviality, that $C\neq 0$.

Let $\kappa\geq |B|+|N|+|R|$ be a~strong limit singular cardinal with $\cf(\kappa)\geq \lambda$. It follows that $\kappa = \kappa^{<\theta} = \kappa^{<\lambda}$; in particular, $\lambda<\kappa$. Put $\mu = \kappa^+$. The non-existence of $0^\sharp$ gives us a~non-reflecting stationary subset $E\subset \mu$ consisting of ordinals with cofinality~$\theta$.

We use \cite[Proposition~3.3]{BS23}\footnote{Our $\mu$ plays the role of the $\lambda$ therein.} to obtain a~stationary subset $F\subseteq \mu$ and a~module $L$ possessing a~filtration $\mathcal L = (L_\alpha\mid \alpha\leq\mu)$ satisfying
\begin{enumerate}
	\item[(a)] $L_\alpha\in\mathcal A$ and $|L_\alpha|\leq\kappa$ for each $\alpha<\mu$, and
	\item[(b)] for every $\delta\in F$ and $\delta<\varepsilon<\mu$, $L_{\delta+1}/L_\delta$ is a direct summand of $L_{\varepsilon}/L_\delta$ isomorphic to $C$.
\end{enumerate}

Notice that $L = L_\mu\in\mathcal A$: indeed, $L$ is a~$\mu$-pure-epimorphic image (in particular, a~$\lambda$-pure-epimorphic image) of $\bigoplus_{\alpha<\mu}L_\alpha\in\mathcal A$ (capitalizing on $(2)$ and $(4)$ above).

By Lemma~\ref{l:purefilt}, we obtain a~pure filtration $(F_\alpha\mid \alpha\leq\mu)$ of $L$ such that $F_\alpha$ is $\lambda$-pure in $L$ whenever $\alpha$ is non-limit or $\cf(\alpha)\geq \lambda$; furthermore, $|F_\alpha|\leq\kappa$ for each $\alpha<\mu$. It follows that the set $\{\alpha<\mu\mid F_\alpha = L_\alpha\}$ is closed unbounded. Consequently, the set $S = \{\alpha<\mu \mid F_\alpha = L_\alpha, \cf(\alpha)\geq\lambda\}$ is unbounded (even stationary) in $\mu$. Fix the non-decreasing bijection $b\colon \mu\to S$ and define a~new filtration $\mathcal F = (M_\alpha\mid \alpha\leq\mu)$ by putting $M_0 = 0$, $M_{\alpha+1} = L_{b(\alpha)} = F_{b(\alpha)}$ for each $\alpha<\mu$, and taking unions in the limit steps.

The resulting filtration $\mathcal F$ is a~pure filtration of $L$ consisting of modules from~$\mathcal A$. Moreover, since each $M_\alpha$ where $\alpha$ is non-limit or $\cf(\alpha)\geq \lambda$ is $\lambda$-pure in $L$ and hence in $M_{\alpha+1}$, we get that $M_{\alpha+1}/M_\alpha\in\mathcal A$ by the part $(4)$ above. Otherwise put, each homomorphism from $M_\alpha$ into $B$ extends to $M_{\alpha+1}$, equivalently to $L$.

\smallskip

We claim that the set \[Z = \{\alpha<\mu \mid M_{\alpha+1}/M_\alpha\notin\mathcal A\}\] is not stationary. Assume, for the sake of contradiction, that it is stationary. By the paragraph above, we know that each $\alpha\in Z$ is a~limit ordinal and has cofinality strictly less than $\lambda$.

The non-existence of $0^\sharp$ implies SCH (cf.\ \cite[Corollary~18.33]{Jech}) which gives us $2^\kappa = \mu$ and, consequently, $\diamondsuit_\mu(Z)$ by \cite{Shelah10}; in particular, we have the weak diamond $\Phi_\mu(Z)$ which we are about to use in the sequel. 

Let $\mathcal E \colon 0 \to \Ker(\pi) \overset{\subseteq}{\to} \bigoplus_{\alpha<\mu}M_\alpha \overset{\pi}{\to} L \to 0$ be the canonical ($\mu$-pure) presentation of $L = M_\mu$ as the directed union of the filtration $\mathcal F$. Also, for each $\alpha<\mu$, put $\mathcal E_\alpha\colon 0 \to \Ker(\pi_\alpha) \overset{\subseteq}{\to} \bigoplus_{\beta<\alpha}M_\beta \overset{\pi_\alpha}{\to} M_{\alpha^\prime} \to 0$ where $\pi_\alpha = \pi\restriction \bigoplus_{\beta<\alpha}M_\beta$ and $\alpha^\prime$ denotes $\alpha$ if $\alpha$ is limit (or zero), and $\alpha^\prime = \alpha - 1$ otherwise. (See also \cite[Lemma~2.7]{BS23}.)

Since $M_\alpha\in\mathcal A$ for each $\alpha<\mu$, and each short exact sequence $\mathcal E_\alpha$ is pure, we see that each homomorphism from $\Ker(\pi_\alpha)$ into $B$ extends to $\bigoplus_{\beta<\alpha}M_\beta$. This allows us to use Lemma~\ref{l:wdiamond}, where our $Z$ plays the role of $E$ in the lemma, to deduce that the contravariant functor $\Hom_R(-,B)$ does not preserve the exactness of $\mathcal E$, in contradiction with $L\in\mathcal A$ (or, in fact, with $\mathcal E$ being $\lambda$-pure and $B$ being $\lambda$-pure-injective).

\smallskip

We showed that $Z$ is not stationary, whence we can choose a~subfiltration of $\mathcal F$, and in turn a~subfiltration $\mathcal K$ of $\mathcal L$, with the consecutive factors in $\mathcal A$. Using the part (b) above and the stationarity of the set $F$ provided to us by \cite[Proposition~3.3]{BS23}, we immediately get that $C = \varinjlim_{\alpha<\theta} N_\alpha$ is isomorphic to a~direct summand in a~consecutive factor of the filtration $\mathcal K$. In particular, $C\in\mathcal A$ and the proof is complete.
\end{proof}

\begin{remark}
In \cite[Theorem 3]{Megibben82}, the author proves, assuming the axiom of constructibility V = L, that every $\lambda$-pure-injective abelian group is pure-injective for any infinite regular cardinal $\lambda$. The preceding result extends Megibben's theorem to any ring assuming the weaker axiom $(*)$.
\end{remark}

Our main theorem now easily follows.

\begin{theorem}\label{t:0sharp} \emph{(}$0^{\sharp}$ does not exist\emph{)} The following conditions on a ring $R$ are equivalent:

\begin{enumerate}
	\item $R$ is right pure-semisimple.
	\item $\Modr R$ has enough $\lambda$-pure-injective objects for each regular cardinal~$\lambda\geq\aleph_0$.
	\item There exists an \textbf{uncountable} regular cardinal $\lambda$ such that $\Modr R$ has enough $\lambda$-pure-injective objects.
  \item Each countably presented pure-projective right $R$-module can be $\aleph_1$-purely embedded into a $\lambda$-pure-injective module for a suitable infinite regular cardinal $\lambda$.
\end{enumerate}
\end{theorem}

\begin{proof} Over a right pure-semisimple ring, all right modules are pure-injective, in particular $\lambda$-pure-injective for any $\lambda\geq\aleph_0$. Thus $(1)\Longrightarrow (2)$ holds. The implications $(2)\Longrightarrow (3)$ and $(3)\Longrightarrow (4)$ are trivial. It remains to show that $(4)\Longrightarrow (1)$.

Assuming that $(4)$ holds true, we prove that $R$ is right pure-semisimple by showing that each countably presented module is pure-projective. Let $C$ be a countably presented module. We can form a~pure short exact sequence \[0\longrightarrow \bigoplus_{n<\omega} F_n \overset{\varepsilon}{\longrightarrow} \bigoplus_{n<\omega} F_n \longrightarrow C \longrightarrow 0\] where all the modules $F_n$ are finitely presented.

Put $F=\bigoplus_{n<\omega}F_n$. Using $(4)$, there exists an $\aleph_1$-pure embedding $f\colon F\to B$ with $B$ a $\lambda$-pure-injective module. Proposition~\ref{p:0sharp} implies that $B$ is, in fact, pure-injective. Consequently, $f$ factorizes through the pure embedding $\varepsilon$ showing that $\varepsilon$ is actually $\aleph_1$-pure, and the short exact sequence above thus splits since $C$ is countably presented. It follows that $C$ is pure-projective.
\end{proof}

Theorem~\ref{t:0sharp} does not hold without the additional assumption $(*)$. To be more precise, the implication $(4)\Longrightarrow (1)$ does not hold since under the presence of a~strongly compact cardinal $\lambda$, all modules with cardinality less than $\lambda$ are $\lambda$-pure-injective regardless of the ring, cf.\ \cite[Proposition~2.1]{Saroch20}. In particular, the countably presented ones are such provided that $|R|<\lambda$. It is an open question whether the equivalence of $(1)$--$(3)$ can be proved in ZFC.

\smallskip

We end this section with another application of Theorem \ref{t:closureprop} when there are enough $\lambda$-pure-injective modules. It is an open problem whether it is consistent with ZFC that, over an arbitrary ring~$R$ (which is not right perfect), there exists a \emph{projective-testing module}, i.e.\ a~module $C\in\ModR$ such that ${}^\perp C = \Proj$. As a corollary of Theorem \ref{t:closureprop}, we show that this might be the case in some models of ZFC. Note that we do not know whether $(2)$ below is consistent with ZFC for uncountable $\lambda$ and $R$ which is not right pure semisimple. (For the notion of a~strongly compact cardinal, see Remark~\ref{r:largecard}.)

\begin{corollary}\label{c:testproj} Assume that $R$ is a ring and $\lambda$ is a regular cardinal such that:
\begin{enumerate}
	\item there exists a strongly compact cardinal $\kappa$ with $|R|<\kappa\leq\lambda$;
	\item $\Modr R$ has enough $\lambda$-pure-injective objects.
\end{enumerate}
Then there exists a projective-testing module in $\Modr R$.
\end{corollary}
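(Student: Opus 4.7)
The idea is to apply Theorem~\ref{t:closureprop}(2) with $\mathcal C$ taken to be the class of all $\lambda$-pure-injective modules. This produces a single $\lambda$-pure-injective module $C$ such that $\mathcal A := {}^\perp\mathcal C = {}^\perp C$, and it then suffices to identify $\mathcal A$ with $\Proj$, since $C$ will then be projective-testing. A key preliminary observation is that, by condition~(1) together with \cite[Proposition~2.1]{Saroch20} (the same ingredient that drives the proof of Proposition~\ref{p:largecard}), every $R$-module of cardinality less than~$\kappa$ is $\lambda$-pure-injective and hence belongs to $\mathcal C$.

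The inclusion $\Proj\subseteq\mathcal A$ is immediate. For the reverse inclusion, I would first dispose of the case $A\in\mathcal A$ with $|A|<\kappa$ by choosing a short exact sequence $0\to K\to F\to A\to 0$ with $F$ free of rank at most $|A|+\aleph_0$. Since $|R|<\kappa$ and $\kappa$ is regular, this forces $|K|<\kappa$, so $K\in\mathcal C$; hence $\Ext^1(A,K)=0$, the sequence splits, and $A$ is projective.

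For a general $A\in\mathcal A$, my plan is to reduce to the small case by constructing a continuous filtration $(A_\beta\mid\beta<\sigma)$ of~$A$ whose consecutive factors $A_{\beta+1}/A_\beta$ lie in $\mathcal A$ and have cardinality less than~$\kappa$. By the previous paragraph such factors are automatically projective, and since $\Proj = {}^\perp \Modr R$ is the left half of a cotorsion pair, Eklof's lemma delivers $\Filt(\Proj)=\Proj$, whence $A\in\Proj$. The filtration itself is to be built by transfinite recursion, exploiting the closure of~$\mathcal A$ under $\lambda$-pure submodules furnished by Theorem~\ref{t:closureprop}(1), condition~(2) to realise each prescribed small subset of~$A$ inside a $\lambda$-pure submodule of~$A$ of cardinality still less than~$\kappa$, and the strong compactness of $\kappa$ to keep the construction $<\kappa$-bounded at every successor step.

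The principal technical obstacle is ensuring $<\kappa$-cardinality and $\lambda$-purity of the successive pieces simultaneously: the natural $\kappa$-directed-union decomposition produced by strong compactness yields only $\kappa$-pure submodules, which when $\kappa<\lambda$ need not be $\lambda$-pure, while Theorem~\ref{t:closureprop}(1) only supplies closure of~$\mathcal A$ under $\lambda$-pure submodules. I expect to overcome this through a Hill--Shelah-style saturation argument: at each successor stage, starting from a chosen $<\kappa$-generated submodule, one closes off iteratively by alternately enlarging to witness solutions of $<\lambda$-systems (using~(2)) and preserving cardinality control (using the $\kappa$-directedness provided by strong compactness); the regularity and $\mathcal L_{\kappa\omega}$-compactness of $\kappa$ guarantee that this saturation terminates in fewer than~$\kappa$ steps, keeping each piece of cardinality $<\kappa$.
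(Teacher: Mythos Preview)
Your opening coincides with the paper's: set $\mathcal C$ to be all $\lambda$-pure-injective modules, invoke Theorem~\ref{t:closureprop}(2) to obtain $C$ with ${}^\perp C = \mathcal A := {}^\perp\mathcal C$, and reduce to $\mathcal A = \Proj$; the small case $|A|<\kappa$ is also correct. However, the filtration argument for general $A$ has a genuine gap, and the obstacle you flag is fatal rather than merely technical. A $\lambda$-pure submodule containing a given nonzero set of size $\mu$ produced by a L\"owenheim--Skolem saturation has cardinality at least $\mu^{<\lambda}\geq 2^{<\lambda}\geq\lambda\geq\kappa$ (compare the hypothesis $\mu=\mu^{<\lambda}$ in Lemma~\ref{l:aux}), so one cannot manufacture nontrivial $\lambda$-pure pieces of cardinality $<\kappa$; $\mathcal L_{\kappa\omega}$-compactness of $\kappa$ controls $\kappa$-level reflection, not $\lambda$-purity for $\lambda\geq\kappa$. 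Even granting the successor steps, limit stages of cofinality $<\lambda$ break down: as stressed right after Theorem~\ref{t:complete}, the union of a short chain of $\lambda$-pure submodules need not be $\lambda$-pure, so you lose $A/A_\beta\in\mathcal A$ there. Finally, condition~(2) provides $\lambda$-pure embeddings \emph{into} $\lambda$-pure-injective modules, not small $\lambda$-pure submodules \emph{of}~$A$; its role in your saturation remains unexplained.

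The paper bypasses filtrations entirely. For arbitrary $M\in\mathcal A$, take $0\to K\hookrightarrow P\to M\to 0$ with $P$ projective. Condition~(2) gives a $\lambda$-pure embedding $e\colon K\hookrightarrow N$ with $N\in\mathcal C$; since $\Ext^1(M,N)=0$, this $e$ extends over $P$, which forces $K$ to be $\lambda$-pure in~$P$. Then \cite[Theorem~3.3]{SarochTrlifaj19}, using the strongly compact $\kappa$ with $|R|<\kappa\leq\lambda$, makes the inclusion $K\hookrightarrow P$ split, so $M$ is projective. The key idea you are missing is to apply condition~(2) to the syzygy~$K$ rather than attempting to decompose~$M$.
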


\begin{proof} Let $\mathcal C$ denote the class of all $\lambda$-pure-injective modules and put $\mathcal A = {}^\perp\mathcal C$. By Theorem~\ref{t:closureprop}, we get $C\in\mathcal C$ such that $\mathcal A = {}^\perp C$. We claim that $\mathcal A = \Proj$.

Let $M\in\mathcal A$ be arbitrary and $0 \to K \overset{\subseteq}{\to} P \overset{\pi}{\to} M\to 0$ be a short exact sequence with $P$ projective. By $(2)$, there exists a $\lambda$-pure embedding $e\colon K\to N$ where $N\in\mathcal C$. Since $M\in\mathcal A$, the mapping $e$ can be extended to $P$ showing that $K$ is a $\lambda$-pure submodule of $P$. It follows from $(1)$ and \cite[Theorem~3.3]{SarochTrlifaj19} that $\pi$ splits whence $M$ is projective.
\end{proof}

\medskip

\appendix

\section{A general lemma involving the weak diamond principle}
\label{sec:append}

This appendix is devoted to the proof of the following lemma generalizing \cite[Lemma~2.9]{SarochTrlifaj19}. First, however, we have to recall a couple of definitions. Given a~set $X$ and a~cardinal $\mu$, we say that a~system $(X_\alpha\mid\alpha<\mu)$ is a \emph{$\mu$-filtration of $X$} if $X = \bigcup_{\alpha<\mu}X_\alpha$, $|X_\alpha|<\mu$, $X_\alpha\subseteq X_{\alpha+1}$ for each $\alpha<\mu$, and $X_\alpha = \bigcup_{\beta<\alpha}X_\beta$ whenever $\alpha<\mu$ is a~limit ordinal.

For a regular uncountable cardinal $\mu$ and a stationary subset $E \subseteq \mu$, we denote by $\Phi_\mu(E)$ the following statement: let $X$ be a set with $|X|\leq\mu$ and $(X_\alpha\mid \alpha<\mu)$ be a $\mu$-filtration of $X$; moreover, for each $\alpha\in E$ let a mapping $P_\alpha\colon \mathcal P(X_\alpha)\to \{0,1\}$ be given. Then there exists $\varphi\colon E\to \{0,1\}$ such that, for each $Y\subseteq X$, the set $\{\alpha\in E\mid P_\alpha(Y\cap X_\alpha) = \varphi(\alpha)\}$ is stationary in $\mu$.

The \emph{Weak Diamond Principle} $\Phi$ asserts that ``$\Phi_\mu(E)$ holds for each regular uncountable cardinal $\mu$ and stationary subset $E\subseteq\mu$''. It is consistent with ZFC since it follows, for instance, from the axiom of constructibility V = L. It is however much weaker than V = L. Also each of its instances, $\Phi_\mu(E)$, is considerably weaker than (and thus follows from) the corresponding instance, $\diamondsuit_\mu(E)$, of the famous Jensen's diamond principle.

\begin{lemma}\label{l:wdiamond} Let $B$ be an $R$-module with $|B|\leq \mu$ where $\mu$ is a regular uncountable cardinal. Let \[\mathfrak E = (\mathcal E_\alpha\colon 0\to K_\alpha\overset{\ep_\alpha}{\to} H_\alpha \overset{\pi_\alpha}{\to} M_\alpha\to 0, (u_{\beta\alpha},v_{\beta\alpha},w_{\beta\alpha})\colon \mathcal E_\alpha\to \mathcal E_\beta \mid \alpha<\beta<\mu)\] be a continuous well-ordered directed system consisting of short exact sequences of modules where $H_\alpha$ is $<\mu$-generated for every $\alpha<\mu$. Moreover, assume that, for each $\alpha<\mu$, $u_{\alpha+1,\alpha}$ and $v_{\alpha+1,\alpha}$ are inclusions and the maps $\Hom_R(v_{\alpha+1,\alpha},B)$ and $\Hom_R(\ep_\alpha,B)$ are surjective. Let $\mathcal E\colon 0\to K\overset{\ep}{\to} H \overset{\pi}{\to} M\to 0 = \varinjlim \mathfrak E$.

If a set $E\subseteq \{\alpha<\mu \mid \Hom_R(w_{\alpha+1,\alpha},B)\mbox{ is not surjective}\,\}$ is stationary in~$\mu$ and $\Phi_\mu(E)$ holds, then $\Hom_R(\ep, B)$ is not surjective.
\end{lemma}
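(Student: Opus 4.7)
The plan is to construct $g\colon K\to B$ by transfinite recursion along $\alpha<\kappa$, using $\Phi_\kappa(E)$ to anticipate any hypothetical extension $h\colon H\to B$ of $g$ and to sabotage it at stationarily many stages $\alpha\in E$. At each $\alpha\in E$ I will prepare two candidate successor values $g^{(0)}_{\alpha+1}$ and $g^{(1)}_{\alpha+1}$ for $g_{\alpha+1}$, both extending $g_\alpha$, rigged so that no single $\phi\colon H_\alpha\to B$ with $\phi\ep_\alpha=g_\alpha$ can lift to $H_{\alpha+1}$ compatibly with both. The Weak Diamond function $\varphi\colon E\to\{0,1\}$ will correctly predict, on a stationary subset of $E$, which of the two successors is compatible with the restriction $h|_{H_\alpha}$ of the hypothetical extension; I then always pick the other.

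The dichotomy is built as follows. For $\alpha\in E$, fix using non-surjectivity of $\Hom_R(w_{\alpha+1,\alpha},B)$ a homomorphism $f_\alpha\colon M_\alpha\to B$ not in the image, and lift $f_\alpha\pi_\alpha$ to $\tilde f_\alpha\colon H_{\alpha+1}\to B$ via surjectivity of $\Hom_R(v_{\alpha+1,\alpha},B)$. Given $g_\alpha$, successive use of $\Hom_R(\ep_\alpha,B)$ and $\Hom_R(v_{\alpha+1,\alpha},B)$ yields $q\colon H_{\alpha+1}\to B$ with $q\ep_{\alpha+1}$ extending $g_\alpha$; set $g^{(0)}_{\alpha+1}=q\ep_{\alpha+1}$ and $g^{(1)}_{\alpha+1}=g^{(0)}_{\alpha+1}+\tilde f_\alpha\ep_{\alpha+1}$, both extending $g_\alpha$ along $u_{\alpha+1,\alpha}$ since $\tilde f_\alpha\ep_{\alpha+1}u_{\alpha+1,\alpha}=\tilde f_\alpha v_{\alpha+1,\alpha}\ep_\alpha=f_\alpha\pi_\alpha\ep_\alpha=0$. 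The key claim is that no $\phi\colon H_\alpha\to B$ with $\phi\ep_\alpha=g_\alpha$ can admit liftings $\phi',\phi''\colon H_{\alpha+1}\to B$ (with $\phi'v_{\alpha+1,\alpha}=\phi''v_{\alpha+1,\alpha}=\phi$) realising both $\phi'\ep_{\alpha+1}=g^{(0)}_{\alpha+1}$ and $\phi''\ep_{\alpha+1}=g^{(1)}_{\alpha+1}$: the map $\phi''-\phi'-\tilde f_\alpha$ would vanish on $K_{\alpha+1}$ and restrict to $-f_\alpha\pi_\alpha$ on $H_\alpha$, hence factor as $\bar f\pi_{\alpha+1}$ for some $\bar f\colon M_{\alpha+1}\to B$ with $\bar f w_{\alpha+1,\alpha}=-f_\alpha$, contradicting the choice of $f_\alpha$.

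For the predictor, let $X$ be a set of cardinality $\leq\kappa$ encoding both the graph of a putative $h\colon H\to B$ in $H\times B$ and the sequence of binary decisions taken by the recursion, with a $\kappa$-filtration $(X_\alpha)_{\alpha<\kappa}$ such that $X_\alpha$ records exactly the portion in $H_\alpha\times B$ together with the decisions through stage $\alpha$. For $\alpha\in E$ set $P_\alpha(Y)=0$ iff $Y$ encodes a coherent history through $\alpha$ (determining $g_\alpha$ and $g^{(0)}_{\alpha+1}$) together with some $\phi\colon H_\alpha\to B$ satisfying $\phi\ep_\alpha=g_\alpha$ and admitting a lift $\phi'\colon H_{\alpha+1}\to B$ with $\phi'\ep_{\alpha+1}=g^{(0)}_{\alpha+1}$; otherwise $P_\alpha(Y)=1$. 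Apply $\Phi_\kappa(E)$ to obtain $\varphi$, then run the recursion with $g_{\alpha+1}=g^{(1-\varphi(\alpha))}_{\alpha+1}$ for $\alpha\in E$, with $g_{\alpha+1}=g^{(0)}_{\alpha+1}$ for $\alpha\notin E$, and with unions at limits. If some $h\colon H\to B$ extended $g=\bigcup_\alpha g_\alpha$, the subset $Y\subseteq X$ encoding $h$ together with the true history would, on a closed unbounded $C\subseteq\kappa$, satisfy that $Y\cap X_\alpha$ faithfully records $h|_{H_\alpha}$ and the history through $\alpha$; by $\Phi_\kappa(E)$ some $\alpha\in E\cap C$ fulfils $P_\alpha(Y\cap X_\alpha)=\varphi(\alpha)$, whence $h|_{H_\alpha}$ is compatible with $g^{(\varphi(\alpha))}_{\alpha+1}$ while $g_{\alpha+1}=g^{(1-\varphi(\alpha))}_{\alpha+1}$, and the dichotomy denies any lift of $h|_{H_\alpha}$ to $H_{\alpha+1}$ matching $g_{\alpha+1}$ on $K_{\alpha+1}$, contradicting the existence of $h|_{H_{\alpha+1}}$.

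The principal technical obstacle is the apparent circularity between the construction of $g_\alpha$ and the predictors $P_\alpha$: since $g^{(0)}_{\alpha+1}$ depends on $g_\alpha$ and hence on $\varphi\!\upharpoonright\!(E\cap\alpha)$, the value $P_\alpha(Y)$ cannot be defined from $Y$ alone without additional care. This is dissolved by packaging the decision sequence into $X$ itself, so that $Y\cap X_\alpha$ autonomously determines both the predicted $h|_{H_\alpha}$ and the partial history up to $\alpha$, letting each $P_\alpha$ be defined as a genuine function on $\mathcal P(X_\alpha)$ prior to invoking $\Phi_\kappa(E)$.
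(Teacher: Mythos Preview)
Your proposal is correct and yields the result, but the route you take to handle the circularity between the predictor and the recursion differs from---and is heavier than---the paper's.

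In the paper, the circularity is dissolved not by enlarging $X$ but by a more economical observation: fix once and for all, for each $\alpha$ and each $f\colon K_\alpha\to B$, a chosen extension $f^e\colon H_\alpha\to B$ with $f^e\ep_\alpha=f$. The predictor $P_\alpha(Z)$ then reads the partial map $z\colon H_\alpha\to B$ encoded by $Z$, computes $y=(z\ep_\alpha)^e$, and outputs $1$ iff the induced map $y-z\colon M_\alpha\to B$ factors through $w_{\alpha+1,\alpha}$. This depends only on $Z$ and the fixed data, never on any history. The recursion then uses the \emph{very same} operator $(\cdot)^e$ to form $f_\alpha':=f_\alpha^e$ or $f_\alpha^e+k_\alpha\pi_\alpha$. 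In the contradiction step, the hypothetical extension $g$ satisfies $g\ep|_{K_\alpha}=f_\alpha$, so the predictor's $y$ automatically coincides with the recursion's $f_\alpha^e$: the restriction $g|_{H_\alpha}$ already carries the relevant history, and no separate bookkeeping is needed. Thus the simple choice $X=V\times B$, with $V$ a generating set of $H$, suffices.

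Your approach---packaging the binary decision sequence into $X$---works as well, but note that it requires you to fix in advance the choice of $q$ at every stage so that the simulated recursion inside $P_\alpha$ is deterministic, and to encode graphs on a generating set of $H$ (not on $H$ itself) to keep $|X_\alpha|<\kappa$. Also, your concluding case analysis is slightly off: when $\varphi(\alpha)=1$, the predictor tells you that $h|_{H_\alpha}$ is \emph{not} compatible with $g^{(0)}_{\alpha+1}$, not that it is compatible with $g^{(1)}_{\alpha+1}$; the contradiction in that case is direct (since $g_{\alpha+1}=g^{(0)}_{\alpha+1}$ and $h|_{H_{\alpha+1}}$ witnesses compatibility), and only the case $\varphi(\alpha)=0$ actually invokes your dichotomy. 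Both cases close, just not uniformly as stated.
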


\begin{proof} The proof is a rather small modification of the one of \cite[Lemma~2.9]{SarochTrlifaj19}. We present the full argument here for reader's convenience.

Since $\Hom_R(\ep_\alpha, B)$ is surjective, we can fix, for each homomorphism $f\colon K_\alpha \to B$, an $f^e\in\Hom_R(H_\alpha, B)$ such that $f =f^e\ep_\alpha$. Furthermore, we fix, for each $\alpha\in E$, a~$k_\alpha\in\Hom_R(M_\alpha,B)$ which cannot be factorized through $w_{\alpha+1,\alpha}$.

Using the fact that $\mu$ is regular, we also fix a generating set $V$ of the module $H$ in such a way that $V_\alpha:= V\cap H_\alpha$ generates $H_\alpha$ and $|V_\alpha|<\mu$ for any $\alpha<\mu$. Then $|V|\leq\mu$. Further, we consider a $\mu$-filtration $(B_\alpha\mid\alpha<\mu)$ of the set $B$ (the $B_\alpha$ need not be submodules of $B$, just subsets) and put $X = V\times B$ and $X_\alpha = V_\alpha\times B_\alpha$ for each $\alpha<\mu$. Notice that $(X_\alpha\mid \alpha<\mu)$ is a $\mu$-filtration of $X$.

Let $\alpha\in E$ be arbitrary. We define the mapping $P_\alpha\colon \mathcal P(X_\alpha)\to \{0,1\}$ as follows: if $Z\subseteq X_\alpha$ and there is no homomorphism $z\colon H_\alpha\to B$ such that $Z = z\restriction V_\alpha$, we put $P_\alpha(Z) = 0$; otherwise, we fix the unique $z\in\Hom_R(H_\alpha, B)$ such that $Z = z\restriction V_\alpha$ and put $y = (z\ep_\alpha)^e$. Then $y-z$ is zero on $\Img(\ep_\alpha)$, and thus it defines a unique homomorphism from $M_\alpha$ to $B$. We put $P_\alpha(Z) = 1$ if and only if this homomorphism can be factorized through $w_{\alpha+1,\alpha}$.

Using $\Phi_\mu(E)$, we get $\varphi\colon E\to \{0,1\}$ for our choice of the mappings $P_\alpha$. We recursively construct a homomorphism $f\colon K\to B$ which cannot be factorized through $\ep$. We start with $f_0\colon K_0 \to B$ the zero map. If $\alpha\leq\mu$ is a limit ordinal, we put $f_\alpha = \bigcup_{\beta<\alpha}f_\beta$. Let us assume that $f_\alpha$ is already constructed and $\alpha<\mu$. We define $f_{\alpha+1}\colon  K_{\alpha+1}\to B$ as follows:

Let us put $f_\alpha^\prime = f_\alpha^e$ if $\alpha\not\in E$ or $\varphi(\alpha) = 0$; otherwise, we put $f_\alpha^\prime = f_\alpha^e+k_\alpha\pi_\alpha$. Using the surjectivity of $\Hom_R(v_{\alpha+1,\alpha},B)$, we extend $f_\alpha^\prime$ arbitrarily to a~homomorphism $f_\alpha^+\colon H_{\alpha+1}\to B$ and define $f_{\alpha+1}$ as $f_\alpha^+\ep_{\alpha+1}$.

Finally, we put $f = f_\mu\colon K\to B$. For the sake of contradiction, assume that there exists $g\in\Hom_R(H,B)$ such that $g\ep = f$. It is easy to see that the set $C = \{\alpha<\mu\mid g(V_\alpha)\subseteq B_\alpha\}$ is closed unbounded. Using the property of $\varphi$ for $Y = g\restriction V$, we obtain a $\delta\in C\cap E$ such that $P_\delta(g\restriction V_\delta) = \varphi(\delta)$. Obviously, $f_\delta^+-g\restriction H_{\delta+1}$ is zero on $\Img(\ep_{\delta+1})$. Thus, it defines a unique $h\in\Hom_R(M_{\delta+1},B)$. Then $k = hw_{\delta+1,\delta}$ where $k\colon M_\delta\to B$ is such that $k\pi_\delta = f_\delta^\prime-g\restriction H_\delta$.

If $\varphi(\delta) = 0$, then $k\pi_\delta = f_\delta^e-g\restriction H_\delta$ in the contradiction with $P_\delta(g\restriction V_\delta) = 0$ which has meant that $k$ cannot be factorized through $w_{\delta+1,\delta}$.

If on the other hand $\varphi(\delta) = 1$, then $k\pi_\delta = k_\delta\pi_\delta + f_\delta^e - g\restriction H_\delta$. Since $P_\delta(g\restriction V_\delta) = 1$, we know that $k-k_\delta$ can be factorized through $w_{\delta+1,\delta}$ which immediately implies that $k_\delta$ has this property as well, in contradiction with its choice.
\end{proof}

\medskip

\noindent\emph{Acknowledgement}: The authors would like to thank Ioannis Emmanouil for careful reading of the vast majority of the paper and providing a~valuable feedback.

\bibliographystyle{plain} \bibliography{references}
%/home/manolo/mizurdia@ual.es/Algebra/ReferenciasBibliograficas/
\end{document}